\newtheorem{thm}{Theorem}[section]
\newtheorem{lem}[thm]{Lemma}
\newtheorem{cor}[thm]{Corollary}
\newtheorem{prop}[thm]{Proposition}
\theoremstyle{remark}
\newtheorem{rem}[thm]{Remark}
\newtheorem*{rem*}{Remark}
\theoremstyle{definition}
\newtheorem{dfn}[thm]{Definition}
\newtheorem{ex}[thm]{Example}
\numberwithin{equation}{section}
\newcommand{\om}{\Omega}
\newcommand{\ch}{{\mathcal{O}_c}}
\newcommand{\oa}{{\mathcal{O}_c^\mathrm{a}}}
\newcommand{\C}{\mathbb{C}}
\newcommand{\PP}{\mathbb{P}}
\newcommand{\Ker}{\operatorname{Ker}}
\begin{document}
\afterpage{\cfoot{\thepage}}
\clearpage

\title{On the growth exponent of c-holomorphic functions with algebraic graphs}

\author{Adam Bia\l o\.zyt, Maciej P. Denkowski, Piotr Tworzewski, Tomasz Wawak}\address{Jagiellonian University, Faculty of Mathematics and Computer Science, Institute of Mathematics, \L ojasiewicza 6, 30-348 Krak\'ow, Poland}\email{adam.bialozyt@doctoral.uj.edu.pl}\email{maciej.denkowski@uj.edu.pl}\email{piotr.tworzewski@uj.edu.pl}\email{tomasz.wawak@student.uj.edu.pl}\date{February 8th 2014, {\it Extended:} December 22nd 2019}
\keywords{Complex analytic and algebraic sets, c-holo\-morph\-ic
functions, Liouville type theorem, B\'ezout inequality,  rational functions, regular functions.}
\subjclass{32B15, 32A17, 32A22}

\begin{abstract} 
This paper is the first of a series dealing with c-holomorphic functions defined on algebraic sets and having algebraic graphs. These functions may be seen as the complex counterpart of the recently introduced \textit{regulous} functions. Herein we study their growth exponent at infinity. A general result on injectivity on fibres of an analytic set together with a theorem of Tworzewski and Winiarski gives a bound for the growth exponent of a c-holomorphic function with algebraic graph in terms of the projective degrees of the sets involved. We prove also that algebricity of the graph is equivalent to the function being the restriction of a rational function (a Serre-type theorem). Then we turn to considering generically finite c-holomorphic mappings with algebraic graphs and we prove a B\'ezout-type theorem. We also study a particular case of the \L ojasiewicz inequality at infinity in this setting.
\end{abstract}

\maketitle

\section{Introduction}

The main object we will be dealing with in this article are continuous functions with algebraic graphs, defined on a given algebraic subset of a complex finite-dimensional vector space $M$. Let us stress from the beginning that this is a much larger class than the usual class of regular functions (see Example \ref{przyklad} and Theorem \ref{3.4}). It seems that this particular class of functions has not been yet studied even though it is part of Remmert's larger class of c-holomorphic functions.

To make the notation clearer, we will consider $M={\C}^m$ (anyway, everything here is invariant under linear isomorphisms). Our general aim is to present several effective results concerning this class of functions and thus we study their growth exponent and --- in a forthcoming paper --- also their \L ojasiewicz's exponents at infinity, the Nullstellensatz in this class. Actually, the class of functions we are interested in coincides with the class of c-holomorphic functions with algebraic graphs. C-holomorphic functions were introduced by R. Remmert in his work on proper projections of analytic sets as continuous functions on defined analytic sets and holomorphic at regular points. When dealing with such functions we are bound to use purely geometric methods, as they do not enjoy good enough differential or algebraic properties (compare e.g. \cite{D}). The class of c-holomorphic functions lies in between the class of strongly holomorphic functions (local restrictions of holomorphic functions in the ambient space) and Cartan's weakly holomorphic functions (functions defined and holomorphic in regular points, locally bounded near the singularities). 

It may be interesting to note a kind of analogy between c-holomorphic functions with algebraic graphs and the recently introduced \textit{regulous} functions \cite{FHMM} as well as in \cite{Kol} (see also \cite{KolN}), in connection with the important results of \cite{K}. In essence, regulous functions are real rational functions that admit continuous extensions. Similarly, as shown in Theorem \ref{3.4}, c-holomorphic functions with algebraic graphs on an algebraic set are exactly continuous restrictions of rational functions. 

\medskip
Our main results presented here include a general theorem concerning the generic injectivity of non-constant c-holomorphic functions on fibred analytic sets (Theorem \ref{genInj}), an estimate of the growth exponent at infinity of c-holomorphic functions with algebraic graph (Theorem \ref{oszacowanie} and Proposition \ref{rwn}), an algebraic graph theorem (Theorem \ref{3.4}), a general B\'ezout-type inequality (Theorem \ref{Bezout} and Theorem \ref{krzywa} for the case of curves) and a new sharp upper bound for the \L ojasiewicz exponent at infinity of a proper c-holomorphic mapping with algebraic graph (Theorem \ref{AB}).

\medskip
For the convenience of the reader we recall the definition of a c-holo\-morph\-ic mapping. Let $A\subset\om$ be an analytic subset of an open set $\om\subset{\C}^m$.

\begin{dfn}[R. Remmert, see \cite{L}, \cite{Wh}] A mapping $f\colon
A\to{\C}^n$ is called {\it c-ho\-lo\-morph\-ic} if it is
continuous and the restriction of $f$ to the subset of regular
points ${\rm Reg} A$ is holomorphic. We denote by $\ch (A,{\C}^n)$
the ring of c-holomorphic mappings (with pointwise multiplication), and by $\ch (A)$ the ring of
c-holomorphic functions. 
\end{dfn}
It is a way of generalizing the notion of holomorphic mapping onto
sets having singularities and a more convenient one than the usual notion of {\it weakly holomorphic functions} (i.e. functions defined and holomorphic on $\mathrm{Reg} A$ and locally bounded on $A$). The following theorem is fundamental for all what we shall do (cf. \cite{Wh} 4.5Q):

\begin{thm}\label{graf} A mapping $f\colon A\to {\C}^n$
is c-holomorphic iff it is continuous and its graph
$\Gamma_f:=\{(x,f(x))\mid x\in A\}$ is an analytic subset of
$\om\times{\C}^n$.\end{thm}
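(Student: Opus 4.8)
The statement is an equivalence, so the plan is to prove the two implications separately; both are local on $\om$, and since an analytic set is locally a finite union of its irreducible components one reduces at once to the case of a pure $k$-dimensional $A$. The one fact used on both sides is that a continuous $f$ has closed graph: if $(x_\nu,f(x_\nu))\to(x,y)$ in $\om\times\C^n$, then $x\in A$ because $A$ is closed in $\om$, and $y=f(x)$ by continuity.

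For the implication ``$f$ continuous with $\Gamma_f$ analytic $\Rightarrow$ $f|_{\mathrm{Reg}\,A}$ holomorphic'' I would fix $a\in\mathrm{Reg}\,A$ and choose biholomorphic coordinates near $a$ flattening $A$ to $U\times\{0\}\subset\C^k\times\C^{m-k}$ with $U$ open in $\C^k$; intersecting the transformed analytic graph with $\C^k\times\{0\}\times\C^n$ then shows that, after this identification, $\Gamma_f$ is an analytic subset of $U\times\C^n$ and is the graph of the continuous $f\colon U\to\C^n$. The first projection $p\colon\Gamma_f\to U$ is proper --- here continuity of $f$ enters, the preimage of a compact $K\subset U$ being the graph over $K$, hence compact --- and bijective, so $\Gamma_f$, being homeomorphic to $U$, has pure dimension $k$. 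On the dense open set $\mathrm{Reg}\,\Gamma_f$ the map $p$ is an injective holomorphic map between $k$-dimensional manifolds, hence, by a classical theorem, an open embedding; thus $f=(\text{second projection})\circ p^{-1}$ is holomorphic on the open set $p(\mathrm{Reg}\,\Gamma_f)$, whose complement $p(\mathrm{Sing}\,\Gamma_f)$ is, by the proper mapping theorem, an analytic subset of $U$ of dimension $<k$. Since $f$ is continuous on $U$ and holomorphic off this thin analytic set, Riemann's removable singularity theorem makes it holomorphic on all of $U$; as $a$ was arbitrary, $f|_{\mathrm{Reg}\,A}$ is holomorphic, i.e.\ $f$ is c-holomorphic. (One could also reduce to $n=1$, write $\Gamma_f$ near $(a,f(a))$ via Weierstrass preparation as the zero set of a monic $P(z,\zeta)=\zeta^d+a_{d-1}(z)\zeta^{d-1}+\cdots$, observe that injectivity of $p$ forces $P(z,\cdot)=(\zeta-f(z))^d$, and read off $f=-a_{d-1}/d$.)

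For the converse, ``$f$ c-holomorphic $\Rightarrow$ $\Gamma_f$ analytic'': over $\om\setminus\mathrm{Sing}\,A$ the set $A$ is a manifold and $f$ is holomorphic, so $\Gamma_f$ is there a complex submanifold, in particular analytic. The whole difficulty --- and what I expect to be the main obstacle --- is to extend analyticity across $\mathrm{Sing}\,A\times\C^n$: although each fibre of $\Gamma_f$ over $\mathrm{Sing}\,A$ is a single point, the set $\mathrm{Sing}\,A\times\C^n$ has codimension only $m-\dim\mathrm{Sing}\,A$ in $\om\times\C^n$, which is too small for a direct appeal to the Remmert--Stein theorem. My plan is to fix $a\in\mathrm{Sing}\,A$ and a local Noether normalization: a linear $\pi\colon\C^m\to\C^k$ that restricts near $a$ to a $d$-sheeted branched covering of a polydisc $P\subset\C^k$ with analytic branch locus $Z\subsetneq P$, $\dim Z<k$. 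Over $P\setminus Z$ the elementary symmetric functions of the $d$ values of each component $f_j$ are single-valued holomorphic functions, and they are bounded near $a$ because the continuous $f$ is locally bounded; by Riemann's theorem they extend holomorphically across $Z$, yielding monic polynomials $Q_j(z,\zeta)$ whose common zero locus cuts out, inside $A\times\C^n$, an analytic set $Y$ of dimension $k$ containing $\Gamma_f$ near $(a,f(a))$. Now $\Gamma_f$ is closed, off the set $E:=\Gamma_f\cap(\pi^{-1}(Z)\times\C^n)$ it coincides with the pure $k$-dimensional analytic set $\Gamma_f\setminus(\pi^{-1}(Z)\times\C^n)$, and $E$ is contained in $Y\cap(\pi^{-1}(Z)\times\C^n)$, hence in an analytic set of dimension $\le\dim Z<k$, so that $\mathcal{H}^{2k}(E)=0$; the removable-singularity theorem for analytic sets (Remmert--Stein in Shiffman's Hausdorff-measure form) then gives that $\Gamma_f$ is analytic near $(a,f(a))$, and $a$ being arbitrary, in all of $\om\times\C^n$. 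A shorter route would be to pass to the normalization $\nu\colon\widehat{A}\to A$: there $f\circ\nu$ is continuous on the normal space $\widehat{A}$ and holomorphic off a thin set, hence holomorphic (Riemann extension on normal spaces), and $\Gamma_f=(\nu\times\mathrm{id})(\Gamma_{f\circ\nu})$ is analytic as the image of an analytic set under the finite map $\nu\times\mathrm{id}$.
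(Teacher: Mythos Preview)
The paper does not actually prove this theorem: it is quoted as a known foundational fact with the reference ``(cf.\ \cite{Wh} 4.5Q)'', so there is no in-paper argument to compare against. Your proof is therefore being measured only against correctness, not against the paper's own route.

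Your argument is essentially correct in both directions. Two small comments. First, in the forward direction (continuous with analytic graph $\Rightarrow$ c-holomorphic), the Weierstrass-preparation variant you sketch for $n=1$ is in fact the cleanest path and avoids invoking the proper mapping theorem; the main argument via ``injective holomorphic between $k$-manifolds is an open embedding'' is fine too. Second, in the converse direction, once you have produced the analytic set $Y$ of pure dimension $k$ containing $\Gamma_f$ and observed that $E$ lies in $Y\cap(\pi^{-1}(Z)\times\C^n)$, which is analytic of dimension $\le\dim Z<k$, the \emph{classical} Remmert--Stein theorem already applies (the exceptional set is contained in an analytic set of dimension strictly less than $k$); invoking the Hausdorff-measure form is unnecessary, and the relevant hypothesis there would be $\mathcal{H}^{2k-1}(E)=0$, not $\mathcal{H}^{2k}(E)=0$ --- though of course your $E$ satisfies the stronger condition as well. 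The normalization shortcut you mention at the end is indeed the most economical proof of this implication.
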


For a more detailed list of basic properties of c-holomorphic mappings see \cite{Wh} and \cite{D}. 
 Let us just note that composing two c-holomorphic functions yields again a c-holomorphic function:
\begin{prop}\label{zlozenie}
Assume that $X$ and $Y$  are analytic subsets of open sets $\Omega\subset{\C}^m$ and $D\subset{\C}^n$ respectively, and $f\colon X\to Y$ and $g\colon Y\to {\C}$ are c-holomorphic. Then $g\circ f\in\mathcal{O}_c(X)$.
\end{prop}
\begin{proof}
Since $g\circ f$ is continuous, it is enough to check that its graph is locally analytic (cf. Theorem \ref{graf}). Consider the natural projection $\pi(x,y,z)=(x,z)$ where $(x,y,z)\in {\C}^m\times{\C}^n\times{\C}$. Then 
$$
\Gamma_{g\circ f}=\pi((\Gamma_f\times{\C})\cap (X\times\Gamma_g)).
$$
The projected set $Z:=(\Gamma_f\times{\C})\cap (X\times\Gamma_g)$ is analytic. 
Given a compact set $K\subset X\times{\C}$ we put $L:=p(K)$ for $p(x,z)=x$.
Now, $L$ is compact and 
$$
\pi^{-1}(K)\cap Z\subset L\times f(L)\times g(f(L)),
$$
hence $\pi|_Z$ is proper. By the Remmert Proper Mapping Theorem, $\Gamma_{g\circ f}$ is analytic as required.
\end{proof}

Finally, we recall some notions we will be using. For a polynomial $P\in\mathbb{C}[x_1,\dots, x_m]$ let $P^+$ denote its homogeneous part of maximal degree, i.e. $\deg P^+=\mathrm{deg} P$ and $\deg (P-P^+)<\deg P$. We denote by $\tilde{P}(t,z)=\sum_{|\alpha|\leq d}a_{\alpha} z^\alpha t^{d-|\alpha|}$ the homogenization of $P(z)=\sum_{|\alpha|\leq d} a_{\alpha} z^\alpha$ where $d=\deg P$. Then, $\tilde{P}(0,z)=P^+(z)$. 

Recall also (see \cite{L} VII.\S 7 and \cite{Ch}) that if $\Gamma\subset{\C}^n$ is algebraic of pure dimension $k$, then its projective degree $\deg \Gamma=\#(L\cap \Gamma)$ for any $L\subset{\C}^n$ affine subspace of dimension $n-k$ transversal to $\Gamma$ and such that $L_\infty\cap\overline{\Gamma}=\varnothing$, where $\overline{\Gamma}=\overline{\PP(\{1\}\times \Gamma)}$, for $\PP\colon{\C}^{1+n}_*\to\PP_n$ the canonical projection, is the projective closure and $L_\infty$ denotes the points of $L$ at infinity (i.e. the intersection of $\overline{L}$ with the hyperplane at infinity in $\mathbb{P}_n$). The point is that the condition  $L_\infty\cap\overline{\Gamma}=\varnothing$ (equivalently: $L_\infty\cap{\Gamma}_\infty=\varnothing$) is equivalent to the inclusion
$$
\Gamma\subset\{u+v\in L'+L\mid |v|\leq\mathrm{const.} (1+|u|)\}
$$
where $L'$ is any $k$-dimensional affine subspace such that $L'+L={\C}^n$. Moreover, for any $(n-k)$-dimensional affine subspace $L$ cutting $A$ in a zero-dimensional set (with no additional hypotheses) there is $\#(L\cap\Gamma)\leq\deg \Gamma$. 
Writing $G'_{n-k}({\C}^n)$ for the set of affine hyperplanes of dimension $n-k$, we have
$$
\deg \Gamma=\max\{\#(L\cap \Gamma)\mid L\in G'_{n-k}({\C}^n)\colon \dim (L\cap \Gamma)=0\}.
$$ 
The projective degree of $\Gamma$ is in fact equal to the local degree (Lelong number) at zero of the cone defined by $\Gamma$. Namely, if $C_\Gamma$ denotes the closure in ${\C}\times{\C}^n$ of the pointed cone $S_\Gamma:=\{{\C}_*\cdot (1,x)\mid x\in \Gamma\}$, then $\deg \Gamma={\deg}_0 C_\Gamma$. Of course, $C_\Gamma=\PP^{-1}(\overline{\Gamma})\cup\{0\}$.


Let $\Gamma^*:=\{x\in{\C}^n\mid (0,x)\in C_\Gamma\}$ which corresponds to the points at infinity but seen in the affine space. In particular, $\{P^+=0\}=\{P=0\}^*$. Moreover, $\Gamma^*=\{v\in{\C}^n\mid \exists \Gamma\ni v_\nu\to \infty, \lambda_\nu\in{\C}\colon \lambda_\nu v_\nu\to v\}$ and this is a complex cone. Finally, observe that now $V_\infty\cap W_\infty=\varnothing$, where $V,W\subset{\C}^n$ are algebraic, is equivalent to $V^*\cap W^*=\{0\}$.
Note by the way that with this approach it is easy to check 
the following:
\begin{prop}\label{stopien wykresu}
 Let $F\colon{\C}^m\to{\C}$ be a non-constant polynomial. Then $\deg F=\deg\Gamma_F.$
\end{prop}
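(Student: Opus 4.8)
The plan is to exploit the identity $\mathrm{deg}\,\Gamma_F=\mathrm{deg}_0 C_{\Gamma_F}$ recalled above, by writing the cone $C_{\Gamma_F}$ down explicitly. Put $d=\mathrm{deg}\,F$; non-constancy gives $d\geq 1$ and $F^+\not\equiv 0$, i.e. $\tilde F(0,\cdot)\not\equiv 0$. Note also that $\Gamma_F\subset{\C}^{m+1}$ has pure dimension $m$, so here the subspaces $L$ in the degree formula are affine lines. Using coordinates $(\mu,y,w)$ on ${\C}\times{\C}^m\times{\C}$, a point of the pointed cone $S_{\Gamma_F}=\{{\C}_*\cdot(1,x,F(x))\mid x\in{\C}^m\}$ has the form $(\lambda,\lambda x,\lambda F(x))$ with $\lambda\in{\C}_*$; eliminating $x=y/\mu$ and using that $\tilde F$ has degree $d$ with $\tilde F(\mu,y)=\mu^{d}F(y/\mu)$ for $\mu\neq 0$, one gets
\[
S_{\Gamma_F}=\{(\mu,y,w)\mid \mu\neq 0,\ \mu^{d-1}w=\tilde F(\mu,y)\}.
\]

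Next I would set $G(\mu,y,w):=\mu^{d-1}w-\tilde F(\mu,y)$, observe that every monomial of $\tilde F$ has total degree $d$ and so does $\mu^{d-1}w$, hence $G$ is homogeneous of degree $d$, and then check that $C_{\Gamma_F}=\{G=0\}$. The key point is that $G$ is irreducible: as a polynomial in $w$ alone it has degree one, so a nontrivial factorization would force a common divisor of its coefficients $\mu^{d-1}$ and $\tilde F(\mu,y)$, i.e. $\mu\mid\tilde F$, i.e. $F^+\equiv 0$ — excluded. Thus $\{G=0\}$ is an irreducible hypersurface of ${\C}^{m+2}$, of which $S_{\Gamma_F}=\{G=0\}\cap\{\mu\neq 0\}$ is a nonempty (take $\mu=1$) Zariski-open, hence dense, subset; therefore $C_{\Gamma_F}=\overline{S_{\Gamma_F}}=\{G=0\}$. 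Since $G$ is reduced (being irreducible) and homogeneous of degree $d$, the Lelong number at $0$ of $\{G=0\}$ equals $d$, so $\mathrm{deg}\,\Gamma_F=\mathrm{deg}_0 C_{\Gamma_F}=d=\mathrm{deg}\,F$.

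The one step that requires genuine care is the identification $C_{\Gamma_F}=\{G=0\}$: one must rule out that the closure of $S_{\Gamma_F}$ is strictly smaller, and this is exactly where the irreducibility of $G$ — and hence the hypothesis $F^+\not\equiv 0$ — is used; after that, everything is bookkeeping with degrees, together with the standard fact (already implicit in the excerpt's discussion of $\mathrm{deg}_0$) that the Lelong number at the origin of a hypersurface cut out by a reduced homogeneous polynomial is its degree.

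As a more hands-on alternative, which uses only the two facts on projective degree recalled just above (the bound $\#(L\cap\Gamma)\leq\mathrm{deg}\,\Gamma$ and the maximum formula), I would intersect $\Gamma_F$ with an affine line $L=\{a+tb\mid t\in{\C}\}$, $a=(a',a_{m+1})$, $b=(b',b_{m+1})$: then $a+tb\in\Gamma_F$ iff $p(t):=F(a'+tb')-a_{m+1}-tb_{m+1}=0$, a polynomial in $t$ of degree $\leq d$ whose $t^d$-coefficient is $F^+(b')$ for $d\geq 2$ (and $F^+(b')-b_{m+1}$ for $d=1$). Whenever $\dim(L\cap\Gamma_F)=0$ we have $p\not\equiv 0$, so $\#(L\cap\Gamma_F)\leq\mathrm{deg}\,p\leq d$ and hence $\mathrm{deg}\,\Gamma_F\leq d$; conversely, choosing $b'$ (and $b_{m+1}$ if $d=1$) so that $\mathrm{deg}\,p=d$ for every $a$, and then $a_{m+1}$ outside the finite set of critical values of $t\mapsto F(a'+tb')-tb_{m+1}$, produces a line meeting $\Gamma_F$ in exactly $d$ distinct points, whence $\mathrm{deg}\,\Gamma_F\geq d$. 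I expect this second argument is the one to present, since it is elementary and uses precisely the tools just introduced.
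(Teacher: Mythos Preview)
Your second, ``hands-on'' argument is essentially the paper's own proof: the paper writes $\Gamma_F=\{G=0\}$ for $G(x,t)=t-F(x)$, observes $G^+=-F^+$ so that $\Gamma_F^*=\{F^+=0\}\times{\C}$, and then chooses a line $L=\ell\times\{t_0\}$ whose direction $(b',0)$ satisfies $F^+(b')\neq 0$ (your choice with $b_{m+1}=0$), reducing $\#(L\cap\Gamma_F)$ to the degree of the one-variable polynomial $F|_\ell$; your version is in fact slightly more explicit about avoiding critical values to get $d$ \emph{distinct} intersection points, where the paper is terse.

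Your first approach, computing $C_{\Gamma_F}=\{\mu^{d-1}w=\tilde F(\mu,y)\}$ and reading off the Lelong number, is also correct and is a genuinely different route the paper does not take. It has the merit of yielding the equality in one stroke rather than as two inequalities, but it relies on the fact that $\mathrm{deg}_0\{H=0\}=\mathrm{deg}\,H$ for a reduced homogeneous $H$, which, while standard, is not spelled out in the paper; the paper's line-intersection argument stays entirely within the affine degree formula it has just recalled. (One cosmetic point: your irreducibility argument ``a common factor of $\mu^{d-1}$ and $\tilde F$ forces $\mu\mid\tilde F$'' tacitly assumes $d\geq 2$; for $d=1$ the coefficient $\mu^{d-1}=1$ already makes $G$ irreducible.)
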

\begin{proof}
Observe that $\Gamma_F$ is the zero set of the polynomial $G(x,t)=t-F(x)$ of degree $d:=\deg F$ (we may assume that $d>1$, otherwise there is nothing to prove). Clearly, $G^+=-F^+$ and so $\Gamma_F^*=\{x\colon F^+(x)=0\}\times{\C}$. Therefore, an affine line $L\in G_1'({\C}^{m+1})$ realizing $\deg\Gamma_F$ can be chosen of the form $\ell\times\{t_0\}$ with $\ell\in G_1'({\C}^m)$ such that $\ell^*\cap\{F^+=0\}=\varnothing$. But then $L\cap \Gamma_F$ corresponds to the degree of $F|_\ell$ which is $d$.
\end{proof}
\begin{rem}
 It is worth noting that such an equality is no longer true for polynomial mappings (where the degree is defined to be the greatest degree of the components, which is accounted for in the next section). Indeed, it suffices to consider a proper polynomial mapping $F=(F_1,\dots, F_m)\colon{\C}^m\to{\C}^m$ such that $\deg F_j>1$ and $\bigcap\{F^+_j=0\}=\{0\}$. Then by the B\'ezout Theorem, for the generic $w\in{\C}^m$ (\footnote{by `generic' we always mean `apart from a nowheredense algebraic set'.}) there is $\#F^{-1}(w)=\prod\deg F_j>\deg F$ and since $F^{-1}(w)=\Gamma_F\cap({\C}^m\times\{w\}^m)$, it follows that $\deg\Gamma_F\geq \#F^{-1}(w)$. For instance, $m=2$, $F(x,y)=(x^2,y^3)$ is a good example.

Nonetheless, if $F\colon {\C}_x^m\to{\C}_y^n$ is a proper polynomial mapping (hence $n\geq m$), then in view of the main result of \cite{TW1} applied to the natural projection $\pi(x,y)=x$ restricted to $\Gamma_F$ (it is obviously one-sheeted), we obtain
$$
\Gamma_F\subset\{(x,y)\in{\C}^m\times{\C}^n\mid |y|\leq C(1+|x|)^{\deg\Gamma_F}\}
$$
with some $C>0$ (here $|\cdot|$ denotes any norm). This means that $|F(x)|\leq C(1+|x|)^{\deg\Gamma_F}$ and so $\mathrm{deg} F\leq\deg\Gamma_F$ (cf. section 3).
\end{rem}

\section{Generic injectivity of c-holomorphic non-constant mappings on fibred analytic sets}

We consider the following general situation. 
Let $A\subset D\times{\C}^n$ be a pure $k$-dimensional analytic set with proper projection $\pi(x,y)=x$ onto the domain $D\subset{\C}^k$. Let $d$ denote the multiplicity of $\pi|_A$ as a branched covering and $\sigma_\pi\subsetneq D$ its critical (or discriminant) set. 

We assume throughout this section that for projections $\pi'$ close to $\pi$ we still have $\pi'|_A$ a $d$-sheeted branched covering. Close, in this case, means close in the space of all epimorphisms (projections, as it were) ${\C}^{k+n}\to{\C}^k$. Equivalently, that means that $\Ker\pi$ and $\Ker\pi'$ are close in the $n$-th Grassmannian $G_n({\C}^{k+n})$, i.e. in the sense of the Kuratowski convergence, cf. \cite{DP} where this is explained in details.  We shortly recall that thefamily of closed subset $\mathcal{F}_X$ of a locally compact metric space $X$ can be endowed with a metrizable and compact topology in which the convergence of closed sets $F_\nu\to F$ is equivalent to \begin{enumerate}
\item[(a)] Any $x\in F$ is the limit of a sequence $F_\nu\ni x_\nu\to x$;
\item[(b)] Given a compact set $K\subset X\setminus F$, there is $F_\nu\cap K=\varnothing$ for all $\nu$ large enough.
\end{enumerate}
This is a natural generalization of the convergence of compact sets in the Hausdorff metric. More importantly, the natural topology of the Grassmannian and of $G'_k({\C})$ gives exactly this convergence (cf. \cite{L} and \cite{DP}). It is then easy to check that the mapping
$$
{\C}^m\times G_k({\C}^m)\ni (x,L)\mapsto x+L\in G'_k({\C}^m)
$$
is continuous.

As a matter of fact, we are particularly interested in the following two situations:

\smallskip
\begin{enumerate}                                                                                  \item Suppose that $D={\C}^k$ and $A$ is algebraic. If $\pi$ is a projection realizing the projective degree $\deg A$, then $A\subset\{(x,y)\mid |y|\leq \mathrm{const.}(1+ |x|)\}$. Thence it is easy to see that for $n$-dimensional linear subspaces $L\in G_n({\C}^{k+n})$ close enough to $L_0:=\{0\}^k\times{\C}^n$ the projection $\pi^L$ along $L$ is still proper on $A$ and has multiplicity $\deg A$ (\footnote{Indeed, for $L$ close to $L_0$ we will still have $A\subset \{x+u\mid x\in {\C}^k, u\in L, |u|\leq\mathrm{const.} (1+|x|)\}$; this follows from the fact that this inclusion is equivalent to the intersection at infinity being void: $A_\infty\cap \overline{L_0}=\varnothing$.}).

\smallskip
\item Suppose that $A\subset{\C}$ is locally analytic, $0\in A$ and $d$ is the local degree (Lelong number) ${\deg}_0 A$. If $\pi|_A$ realizes ${\deg}_0 A$ (as its multiplicity), then for the tangent cone $C_0(A)$, keeping the notations introduced so far, we have $L_0\cap C_0(A)=\{0\}$ and this property is open in the Grassmannian (cf. \cite{Ch} or \cite{L} (\footnote{By \cite{L} B.6.8, for any $L_0\in G_n({\C}^{k+n})$, the sets $\{L\in G_n({\C}^{k+n})\mid L\subset\{u+v\in L_0+L_0^\perp\mid |v|\leq c|u|\}\}$ for varying $c>0$ form a basis of neighbourhoods of $L_0$.})). Since there is a bounded neighbourhood $W\subset{\C}^n$ of zero such that $(\{0\}^k\times \overline{W})\cap A=\{0\}$, by taking $A\cap (G\times W)$, for some bounded neigbourhood $G\subset{\C}^k$ of the origin, instead of $A$, we may assume that $L_0\cap A=\{0\}$. Note that $L_0\cap A=\{0\}$ implies $A\cap (U\times {\C}^n)=A\cap( U\times V)$ for some bounded neighbourhoods of the origin $U\subset{\C}^k$, $V\subset{\C}^n$. It is easy to see that we may choose the same $U$ for any vector complement $L$ of ${\C}^k\times\{0\}^n$ sufficiently close to $L_0$ (\footnote{This is a simple consequence of the observation that $L_0\cap C_0(A)=\{0\}$ is equivalent to saying that $A\cap (U\times V)\subset\{(x,y)\mid |y|\leq \mathrm{const.}|x|\}$ for some neighbourhoods of zero. In our situation we can take $V={\C}^n$, as already noted. The form of neighbourhoods of $L_0$ (see the previous footnote) account for the rest.}), i.e. in such a way that $\pi^L(A\cap (U+L))=U$. Since $U$ may be chosen connected, we put $D:=U$. 
                                                                                                           
\end{enumerate}

Accordingly with the notations above, we will identify the projections $\pi^L$ with their kernels $L=\Ker\pi^L$. Write $L_0=\Ker\pi$ and $L'=\Ker\pi'$, if necessary.

\bigskip
Let $f\colon A\to {\C}$ be a c-holomorphic function which is \textit{non-constant on any irreducible component of} $A$. 

 Let $\mathscr{P}\subset G_n({\C}^{k+n})$ denote the neighbourhood of $\pi$ for which the multiplicity of the projections onto $D$ is $d$. 

\begin{thm}\label{genInj}
Under the assumptions made above, there is a projection $\pi'\in\mathscr{P}$ arbitrarily close to $\pi$ and such that $f$ is injective on the generic fibre of $\pi'|_A$.
\end{thm}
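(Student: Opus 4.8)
The plan is to reduce the statement to a genericity argument about the position of the projection $\pi'$ relative to the graph $\Gamma_f \subset D\times{\C}^n\times{\C}$, viewed as a $k$-dimensional analytic set. First I would observe that $f$ being injective on a fibre $\pi'^{-1}(x)\cap A = \{(x,y_1),\dots,(x,y_d)\}$ means precisely that the $d$ values $f(x,y_1),\dots,f(x,y_d)$ are pairwise distinct; since $f$ is non-constant on every irreducible component of $A$, for the base-point projection $\pi$ itself the set of $x\in D\setminus\sigma_\pi$ where two sheets happen to carry the same $f$-value is contained in an analytic subset, but it need not be proper — distinct sheets of $\pi|_A$ may globally agree (e.g. when $A$ has a symmetry). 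This is exactly why one must \emph{tilt} the projection. So the real content is: by perturbing $\mathrm{Ker}\pi$ inside $\mathscr{P}$ we can separate sheets that $\pi$ fails to separate.

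Next I would set up the following construction. For a projection $\pi'\in\mathscr P$ with fibre cardinality $d$ over $D\setminus\sigma_{\pi'}$, define
\[
\Delta(\pi') := \{x\in D\setminus\sigma_{\pi'} \mid \exists\, y\neq y' \text{ with } (x,y),(x,y')\in A,\ f(x,y)=f(x,y')\}.
\]
One checks, using that locally the sheets of the covering are given by c-holomorphic (indeed holomorphic over the regular part of the base) selections $s_1,\dots,s_d$, that $\Delta(\pi')$ is contained in a (locally) analytic subset of $D\setminus\sigma_{\pi'}$, namely the projection of the analytic set $\{(x,y,y')\in A\times_D A : y\neq y',\ f(x,y)=f(x,y')\}$ (closure thereof). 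Hence for a \emph{fixed} $\pi'$, either $\Delta(\pi')$ is nowhere dense — in which case $f$ is injective on the generic fibre and we are done — or $\Delta(\pi')$ has nonempty interior, which forces an entire pair of sheets to coincide under $f$ on an open set, and then, by the identity principle for c-holomorphic functions on irreducible components, $f$ takes equal values along two distinct local analytic branches $\Gamma_1,\Gamma_2$ of $A$ that project isomorphically (via $\pi'$) onto $D$. The key point is that if this bad alternative holds for \emph{every} $\pi'$ in a neighbourhood of $\pi$, we derive a contradiction.

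For the contradiction I would argue as follows. Consider the ``difference set''
\[
E := \overline{\{(x,y,y')\in A\times A \mid y\neq y',\ f(x,y)=f(x,y')\}} \subset D\times{\C}^n\times{\C}^n,
\]
which is analytic of dimension $\le k$. Saying that the bad alternative holds for $\pi'$ amounts to saying that $E$ contains an irreducible component $E_0$ of dimension exactly $k$ over which \emph{both} maps $(x,y,y')\mapsto(x,y)$ and $(x,y,y')\mapsto(x,y')$, composed with $\pi'$, are generically finite onto $D$ in a compatible way — more precisely that $\pi'$, lifted to $E_0$, has the two ``sheet-selecting'' maps agree. If this persisted for all $\pi'$ near $\pi$, then the $k$-dimensional component $E_0$ would have to be invariant under all these perturbations, which is impossible unless $E_0$ lies over a proper subset after a generic tilt: indeed, tilting $\mathrm{Ker}\pi'$ moves which pairs of sheets are ``matched'', and a dimension count shows that the locus of projections $\pi'$ for which a fixed pair of irreducible components of $A$ has coinciding $f$-values on a full-dimensional set is nowhere dense in $\mathscr P$ (a fixed $k$-dimensional $E_0$ cannot surject onto $D$ via $\pi'$ for an open set of $\pi'$ while maintaining $y\ne y'$, because the ``diagonal drift'' $y-y'$ is a nonzero c-holomorphic function on $E_0$ and a generic $\pi'$ will restrict it nontrivially). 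Since there are finitely many pairs of components of $A$, a finite intersection of open dense subsets of $\mathscr P$ yields the desired $\pi'$ arbitrarily close to $\pi$.

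The main obstacle I expect is making the last step fully rigorous: controlling, uniformly in the perturbation $\pi'$, the behaviour of the local sheet-selections $s_j^{\pi'}$ and showing that the ``matching pattern'' among them genuinely varies with $\pi'$ — equivalently, ruling out the pathological possibility that $E$ has a $k$-dimensional component which projects isomorphically onto $D$ under \emph{both} coordinate maps simultaneously for a whole neighbourhood of projections. This is where one must use, in an essential way, that $f$ is non-constant on \emph{every} component of $A$ (so $E$ does not contain the full diagonal-type component coming from that branch) together with the openness of the $d$-sheeted covering property on $\mathscr P$; I would handle it by passing to the graph $\Gamma_f$, applying the transversality/genericity of affine subspaces realizing degrees (as recalled before Proposition \ref{stopien wykresu}), and invoking the identity principle on irreducible components of $A$ to propagate a local coincidence of $f$-values to a global one, which then contradicts $\dim E \le k$ together with generic choice of $\pi'$.
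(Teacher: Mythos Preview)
Your overall strategy matches the paper's: set up the dichotomy (either $\Delta(\pi')$ is nowhere dense and you are done, or two sheets of $\pi'|_A$ carry identical $f$-values over an open set), assume the bad alternative for every $\pi'$ near $\pi$, and derive a contradiction via the identity principle. The divergence is in how the contradiction is produced, and there your argument has a real gap.

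First, your set $E$ is defined relative to the \emph{fixed} projection $\pi$: writing points as $(x,y,y')$ with a common $x\in D$ means you are taking the fibre product $A\times_D A$ for $\pi$, not for $\pi'$. So $E$ does not record which pairs lie in the same $\pi'$-fibre, and the sentence ``the bad alternative for $\pi'$ amounts to saying that $E$ contains a component $E_0$\dots'' is not correct as stated. One can repair this by working with the projection-independent set $\tilde E=\overline{\{(p,q)\in A\times A: p\ne q,\ f(p)=f(q)\}}$ (dimension $\le 2k-1$ since $f$ is non-constant on every component) and intersecting with the linear subspace $\{\pi'(p)=\pi'(q)\}$ of codimension $k$; but then the expected dimension is $k-1$ and you must \emph{prove} that a generic $\pi'$ gives the expected dimension. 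Your ``diagonal drift'' and transversality remarks do not do this: an irreducible $k$-dimensional piece can perfectly well dominate the $k$-dimensional base $D$ for an open set of projections, so there is no automatic dimension obstruction, and you have not identified what varies when $\pi'$ moves.

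The paper closes this gap with a concrete geometric idea that is absent from your sketch. After arranging (as you do) that two sheets are glued by $f$ for every nearby $L$, the paper shows one can shrink the neighbourhood so that it is the \emph{same} pair of sheets $\Gamma_i,\Gamma_j$ (consistently labelled via continuity of $A\cap(x_0+L)$ in $L$). Then it fixes a single point $a\in\Gamma_i$, and for each $L$ lets $c_L$ be the unique point of $\Gamma_j^L$ in the $L$-fibre through $a$; the gluing gives $f(c_L)=f(a)$ for all such $L$. As $L$ ranges over an open subset of the Grassmannian, the points $c_L$ sweep out an open subset of $\Gamma_j$, hence of $A$. Thus $f$ is constant (equal to $f(a)$) on a nonempty open subset of an irreducible component of $A$, and the c-holomorphic identity principle forces $f$ constant on that component, contradicting the hypothesis. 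This ``sweep'' argument is the missing mechanism that turns the perturbation of $\pi'$ into a genuine constraint on $f$.
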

\begin{proof}
We will show that in any neighbourhood of $L_0$ in $G_n({\C}^{k+n})$, there is an $L$ such that 
$$
\forall x\in D\setminus\sigma_L,\> \#f(A\cap (x+L))=d,\leqno{(\#)}
$$
for some analytic set $\sigma_L\subsetneq D$ (\footnote{Note that in both situations (1) and (2) described before the Theorem it is possible to take a common $D$ for all $L$ close enough to $L_0$.}). If we denote $\Gamma_L=(\pi^L\times \mathrm{id}_{\C})(\Gamma_f)$ and $p(x,t)=x$, then condition $(\#)$ means precisely that $p|_{\Gamma_L}$ has multiplicity $d$ (it could only be less) as a branched covering. From this we infer that in $(\#)$ it is enough to find one appropriate point $x$. 

Let $\sigma_{\pi^L}$ denote the critical set of the branched covering $\pi^L|_A$. For a given $L$ near $L_0$ put $$Z_L=\{x\in D\mid \#f(A\cap (x+L))<d\}.$$ Of course, there is always $\sigma_{\pi^L}\subset Z_L$.

If for $L_0$ we have $Z_{L_0}=D$, then given a point $x_0\in D\setminus\sigma_{\pi^{L_0}}$ and its (simply) connected neighbourhood $V\subset D\setminus\sigma_{\pi^{L_0}}$ for which $(\pi^{L_0})^{-1}(V)\cap A=\bigcup_1^d \Gamma_j$ is a disjoint union of graphs $\Gamma_j$ of holomorphic functions $\gamma_j$, there are indices $i\neq j$ such that 
$$\forall x\in V,\ f(x,\gamma_i(x))=f(x,\gamma_j(x)).$$ 
This follows easily from the identity principle. Indeed, write $g_i(x):=f(x,\gamma_i(x))$, $x\in V$. These are holomorphic functions and so $V_{ij}:=\{x\in V\mid g_i(x)=g_j(x)\}$ are analytic. By assumptions $\bigcup_{1\leq i<j\leq d} V_{ij}=V$ and so at least one of the sets $V_{ij}$ must coincide with $V$. 

Let us suppose hereafter that for all $L$ in a neighbourhood $\mathscr{P}'\subset\mathscr{P}$ of $\pi$ we have $Z_L=D$. 

Of course, $V$ and $\Gamma_j$ depend \textit{a priori} on $L_0$. But observe that the convergence $L\to L_0$ implies the convergence (\footnote{The sets being finite, this convergence coincides with the one in the Hausdorff measure.}) $A\cap (x_0+L)\to A\cap (x_0+L_0)$ (use for instance \cite{TW2}). In particular, $\#A\cap(x_0+L)=d$ when $L$ is close enough to $L_0$. 
The idea is that it should be possible to find a neighbourhood $V_1$ of $x_0$ (necessarily contained in $D\setminus\sigma_{\pi_L}$ since $A$ is a union of $d$ disjoint graphs over it), over which the sets $\Gamma_j$ are still graphs in the direction $L$. 

To be more precise, suppose that we have separated the $d$ points of $A\cap (x_0+L_0)$ by pairwise disjoint balls $B_j$ of radius $\varepsilon$, centred at $(x_0,\gamma_j(x_0))$, so that $B_j\cap A=B_j\cap\Gamma_j$. Using the continuity of the map $(x,L)\mapsto A\cap (x+L)$ at the point $(x_0,L_0)$ which we have from \cite{TW2} (\footnote{The intersection $A\cap (x_0+L_0)$ being proper.}), we can find neighbourhoods $x_0\in V_1\subset V$ and $L_0\in\mathscr{P}_1\subset\mathscr{P}'$ such that for any $(x,L)\in V_1\times\mathscr{P}_1$, the Hausdorff distance between $A\cap (x+L)$ and $A\cap (x_0+L_0)$ does not exceed $\varepsilon$. But then we can write $(\pi^L)^{-1}(x)=\{z^L_1,\dots,z_d^L\}$ which is a set of $d$ pairwise different points numbered consistently according to which ball $B_j$ the point $z_j^L$ belongs to. This implies in particular that $z_j^L\in\Gamma_j$ so that we can define holomorphic  inverses $V_1\ni x\mapsto \gamma^L_j(x)$ to $\pi^L|_{\Gamma_j}$ with graphs $\Gamma_j^L\subset\Gamma_j$, for $j=1\dots,d$, $L\in \mathscr{P}_1$.

The same argument as before gives us for each $L\in\mathscr{P}_1$ two indices $1\leq i_L< j_L\leq d$ such that for any 
$x\in V_1$, $f(x,\gamma_{i_L}^L(x))=f(x,\gamma_{j_L}^L(x)).$ 
Next, we show that the indices can be chosen independent of $L$ sufficiently close to $L_0$. Note that the property of `gluing up' two sheets in this way is closed with respect to $L$, i.e. if $\mathscr{P}_1\ni L_\nu\to L_1\in\mathscr{P}_1$, then for any $x_1\in V_1$, $A\cap(x_1+L_\nu)\to A\cap (x_1+L_1)$ by a similar argument as earlier based on \cite{TW2}, and so again separating the points in the fibre $A\cap (x_1+L_1)$ leads to the conclusion that $(x_1,\gamma_j^{L_\nu}(x_1))\to (x_1,\gamma_j^{L_1}(x_1))$ for any $j=1,\dots,d$. Therefore, if $(i_{L_\nu},j_{L_\nu})=(i,j)$, for all $\nu$, then also $(i_{L_1},j_{L_1})=(i,j)$. Hence, the sets 
$$
\mathscr{P}_{ij}:=\{L\in \mathscr{P}_1\mid \forall x\in V,\ f(x,\gamma^L_i(x))=f(x,\gamma^L_j(x))\}
$$
are closed and, obviously, $\mathscr{P}_1=\bigcup_{1\leq i<j\leq d}\mathscr{P}_{ij}$. The Baire Category Theorem ensures that $\operatorname{int}\mathscr{P}_{ij}\neq\varnothing$, for some $i<j$. Then we find an open subset $\mathscr{P}_1'\subset\mathscr{P}_1$ and a new $L_0'\in\mathscr{P}_1'$. 

According to \cite{L} B.6.8, we may assume that $\mathscr{P}_1'$ is of the form 
$$
\mathscr{P}_1'=\{L\in G_n({\C}^{k+n})\mid L\subset\{u+v\in L'_0+{(L'_0)}^\perp\colon |v|\leq C|u|\}\},
$$
for some $C>0$. Let $a$ and $c$ denote the unique intersection points of $x_0+L_0'$ with $\Gamma_i$ and $\Gamma_j$, respectively, where $i,j$ are the indices chosen above. For positive integers $\nu$ we write $C_\nu$ for the intersection of $\Gamma_j$ with the ball $\mathbb{B}(c,1/\nu)$. 

All we need to show to end the proof is that for some $\nu$ we have $f|_{C_\nu}=f(a)$, for this means that  $f$ is constant on an nonempty open subset of some irreducible component of $A$ and thus, by the identity principle from \cite{D2}, it is constant on that component, contrary to the assumptions.

Suppose that for any $\nu$ there is a point $c_\nu\in C_\nu$ such that $f(c_\nu)\neq f(a)$ which means that for any $L\in\mathscr{P}'_1$, $a-c_\nu\notin L$. Write $a-c_\nu=u_\nu+v_\nu\in L'_0+{(L'_0)}^\perp$. By construction we have the convergence $a-c_\nu\to a-c\in L_0'$ so that $u_\nu\to a-c$, whereas $v_\nu\to 0$. As $a-c\neq 0$, we get, for all $\nu$ large enough,  $|u_\nu|\geq |a-c|/2$ and $C|a-c|/2\geq |v_\nu|$, whence $C|u_\nu|\geq |v_\nu|$ and we may assume this holds for all indices (shifting the sequence, if necessary; note that $C_\nu$ is a nested sequence). 

For a given index $\nu$ let $\ell_\nu$ denote the orthogonal complement of the line ${\C}u_\nu$ in $L_0'$. Then the vector $a-c_\nu=u_\nu+v_\nu$ is orthogonal to $\ell_\nu$. Let $L_\nu:={\C}(a-c_\nu)\oplus \ell_\nu$; we will show that $L_\nu\in\mathscr{P}_1'$. Fix $w\in L_\nu\setminus\{0\}$ and write $w=u+v\in L_0'+(L_0')^\perp$. Then we have a unique representation $u=\alpha u_\nu+u'$ with $\alpha\in{\C}$ and $u'\in\ell_\nu$. On the other hand, we can decompose $w=\beta(a-c_\nu)+u''$ with $\beta\in{\C}$ and $u''\in\ell_\nu$. Therefore, $$\alpha u_\nu+u'+v=\beta(a-c_\nu)+u''$$ together with $a-c_\nu=u_\nu+v_\nu$ leads to
$$
(v-\beta v_\nu)+(\alpha-\beta)u_\nu+(u'-u'')=0
$$
where the three summands are pairwise orthogonal, as they belong to $(L_0')^\perp$, ${\C}u_\nu$ and $\ell_\nu$, respectively. Hence 
$$
v=\beta v_\nu,\quad \alpha=\beta,\quad u'=u''
$$
and so 
$$
C|u|=C\sqrt{|\alpha u_\nu|^2+|u'|^2}\geq C|\alpha u_\nu|\geq |\alpha v_\nu|=|v|
$$
as required for $L_\nu$ to belong to $\mathscr{P}_1'$. But then $a-c_\nu\in L_\nu\in\mathscr{P}_1'$ which is contrary to our assumptions. This ends the proof.
\end{proof}
Let us state clearly what we will need later on:
\begin{cor}\label{injekcja}
Let $A\subset{\C}^m$ be an algebraic irreducible set of dimension $k$ and let $f\in\mathcal{O}_c^\mathrm{a}(A)$. If $f$ is non-constant, then for the generic choice of coordinates in ${\C}^m_z={\C}^k_x\times{\C}^{m-k}_y$ the projection $\pi(x,y)=x$ restricted to $A$ realizes $\deg A$ and $f|_{\pi^{-1}(x)\cap A}$ is injective for the generic $x$.
\end{cor}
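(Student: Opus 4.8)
The plan is to derive Corollary \ref{injekcja} directly from Theorem 2.1 together with the first of the two ``situations'' discussed before that theorem. First I would observe that by the theory of projective degree recalled in the introduction, for the generic choice of linear coordinates $z = (x,y) \in \C^k_x \times \C^{m-k}_y$ the projection $\pi(x,y) = x$ restricted to $A$ is proper and realizes $\deg A$ as the multiplicity of the branched covering $\pi|_A$; indeed the set of $(m-k)$-dimensional linear subspaces $L$ with $L_\infty \cap \overline{A} = \varnothing$ (equivalently $L^* \cap A^* = \{0\}$) is a nonempty Zariski-open subset of the Grassmannian, and for such $L$ one has $\#(\pi^L)^{-1}(x) \cap A = \deg A$ off the discriminant set. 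Fix one such generic projection $\pi = \pi^{L_0}$ with $L_0 = \{0\}^k \times \C^{m-k}$, and let $d := \deg A$.

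Next I would verify that the hypotheses of Theorem 2.1 are met: $A$ is pure $k$-dimensional (being irreducible) and algebraic, $D = \C^k$, and $\pi|_A$ is a $d$-sheeted branched covering realizing $\deg A$. By the argument in situation (1) in the text — the inclusion $A \subset \{(x,y) \mid |y| \le \mathrm{const.}(1+|x|)\}$ is equivalent to $A_\infty \cap \overline{L_0} = \varnothing$, which persists under small perturbations of $L_0$ — there is a neighbourhood $\mathscr{P}$ of $\pi$ in $G_{m-k}(\C^m)$ such that every $\pi' \in \mathscr{P}$ is still a proper $d$-sheeted branched covering of $A$ onto $\C^k$. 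Since $A$ is irreducible and $f$ is non-constant, $f$ is non-constant on every (the only) irreducible component of $A$, so Theorem 2.1 applies.

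Then I would invoke Theorem 2.1: there exists a projection $\pi' \in \mathscr{P}$, \emph{arbitrarily close to} $\pi$, such that $f$ is injective on the generic fibre of $\pi'|_A$. The point of ``arbitrarily close'' is that $\pi'$ still lies in the Zariski-open set of projections realizing $\deg A$ (since that set contains a neighbourhood of $\pi$, by the remark above about $A_\infty \cap \overline{L_0} = \varnothing$ being an open condition). So $\pi'$ simultaneously realizes $\deg A$ and makes $f$ generically injective on fibres. Finally, a linear change of coordinates taking $\mathrm{Ker}\,\pi'$ to $\{0\}^k \times \C^{m-k}$ turns $\pi'$ into the standard coordinate projection $(x,y) \mapsto x$, and this is exactly the ``generic choice of coordinates'' asserted in the corollary.

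The only genuine subtlety — and the place I would be most careful — is the interplay of the two genericity conditions: one must check that the set of projections realizing $\deg A$ actually contains a full neighbourhood of the chosen $\pi$ (not merely a Zariski-dense subset that $\pi'$ might fall outside of), so that the ``arbitrarily close'' $\pi'$ produced by Theorem 2.1 inherits this property. This is precisely what the footnote in situation (1) guarantees, via the equivalence of $A \subset \{|y| \le \mathrm{const.}(1+|x|)\}$ with the void intersection at infinity $A_\infty \cap \overline{L_0} = \varnothing$, an open condition in the Grassmannian; once that is in hand, the corollary follows by simply combining the two statements. Everything else is bookkeeping with linear coordinate changes.
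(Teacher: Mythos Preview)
Your derivation is correct and is precisely the argument the paper intends: in the body of the paper the statement is recorded as a Corollary immediately after Theorem 2.1 with the remark ``Let us state clearly what we will need later on,'' i.e.\ it is meant to follow from Theorem 2.1 applied in situation (1), exactly as you do. Your careful check that the degree-realizing condition is \emph{open} (not merely Zariski-dense), so that the ``arbitrarily close'' $\pi'$ produced by Theorem 2.1 still realizes $\deg A$, is the one point that actually needs saying, and you say it.

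For completeness: the \LaTeX\ source you were not shown contains, after \verb|\end{document}|, an alternative stand-alone proof (presumably an earlier draft) that does \emph{not} invoke Theorem 2.1. There one assumes $0\in A$, $f(0)=0$, forms the auxiliary c-holomorphic map $g(z)=(f(z),z_1,\dots,z_{k-1})$ with isolated fibres, and then argues as in Proposition \ref{Rzuty} that a generic projection separating the points of a maximal fibre of $g$ will make $g$ (hence $f$) injective on generic $\pi$-fibres. That route is more hands-on and avoids the perturbation machinery of Theorem 2.1, at the cost of redoing part of its content; your approach is the cleaner one once Theorem 2.1 is available, and it is the one the published text adopts.
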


In the purely local case we also have the following result that gives a kind of complement to the results of \cite{D}:

\begin{prop}\label{Rzuty}
Let $f\colon (A,0)\to
({\C}_w^k,0)$ be a non-constant c-holo\-morph\-ic germ on a pure
$k$-dimensional analytic germ $A\subset {\C}^m$. Then we can
choose coordinates in ${\C}_z^m={\C}_x^k\times {\C}_y^{m-k}$ in such
a way that for the projections $\pi(x,y)=x$, $\eta:=(\pi\times
\mathrm{id}_{{\C}_w^k})$, $p(x,y,w)=w$, $\varrho(x,w)=w$, $\zeta(x,w)=x$
and the set $\Gamma:=\eta(\Gamma_f)$, we have \begin{enumerate}
\item[{\rm (i)}] $\pi^{-1}(0)\cap C_0(A)=\{0\}$, i.e.
$\mu_0(\pi|_A)={\deg}_0
A$;
\item[{\rm (ii)}] $\mu_0(p|_{\Gamma_f})=\mu_0(\varrho|_\Gamma)$, i.e.
$m_0(f)=\mu_0(\varrho|_\Gamma)$ and so $\mu_0(\eta|_{\Gamma_f})=1$; 
\item[{\rm (iii)}] $\mu_0(\zeta|_\Gamma)={\deg}_0 A$,\end{enumerate}
and this holds true for the generic choice of coordinates.
\end{prop}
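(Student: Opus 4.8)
The plan is to obtain the three properties one at a time, each for a generic choice of coordinates, and then intersect the three (nonempty Zariski-open, or at least residual-in-the-Grassmannian) sets of good coordinates. First I would secure (i): by the classical theory of the tangent cone (see \cite{L} or \cite{Ch}), for the generic $n$-dimensional complement $L$ of ${\C}^k\times\{0\}$ — here I write $n=m-k$ — one has $L\cap C_0(A)=\{0\}$, which is exactly the statement that $\pi^{-1}(0)\cap C_0(A)=\{0\}$, and this in turn is equivalent to $\mu_0(\pi|_A)=\mathrm{deg}_0A$. This is the same openness fact already invoked in situation (2) of Section 2. So (i) holds off a nowhere-dense set of coordinate choices.

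Next I would handle (ii). The key is to apply Corollary \ref{injekcja}'s local analogue — or rather the Theorem of Section 2 in its purely local incarnation — to the c-holomorphic map $f$ on $A$: generically in the coordinates $x$ on the source, the projection $\eta=\pi\times\mathrm{id}_{{\C}^k_w}$ restricted to $\Gamma_f$ is injective on generic fibres, hence $\mu_0(\eta|_{\Gamma_f})=1$ (note $\eta|_{\Gamma_f}$ is automatically one-sheeted over $\mathrm{Reg}$ because $\Gamma_f$ is a graph over $A$ and $\pi|_A$ is generically injective after (i) only if $k=\dim A$... but in fact $\eta$ is injective on $\Gamma_f$ as a map precisely when $f$ is injective on $\pi$-fibres of $A$, which is the generic-fibre statement). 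Once $\mu_0(\eta|_{\Gamma_f})=1$, the tower of projections $p=\varrho\circ\eta$ (on the relevant sets) forces $\mu_0(p|_{\Gamma_f})=\mu_0(\varrho|_\Gamma)\cdot\mu_0(\eta|_{\Gamma_f})=\mu_0(\varrho|_\Gamma)$, which is (ii); and $m_0(f)=\mu_0(p|_{\Gamma_f})$ is the definition of the multiplicity of $f$ at $0$. One has to be a little careful that the generic choice making $\eta$ fibre-injective is compatible with, and does not destroy, (i); but the conditions in (i) concern only the $x$-directions relative to $A\subset{\C}^m_z$, while the fibre-injectivity is an extra open condition on the same $x$-coordinates and on nothing else, so the two generic sets intersect.

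Finally (iii): $\zeta|_\Gamma$ is the composition of $\varrho|_\Gamma$ followed by $\zeta$... no — rather, $\zeta(x,w)=x$ is the projection of $\Gamma\subset{\C}^k_x\times{\C}^k_w$ onto the $x$-factor, and $\Gamma=\eta(\Gamma_f)$ maps onto $A$ under this composed with nothing: $\zeta|_\Gamma$ is conjugate, via $\eta|_{\Gamma_f}$ (which is generically injective by (ii)), to $\pi|_A$ followed by the identity, so $\mu_0(\zeta|_\Gamma)=\mu_0(\pi|_A)=\mathrm{deg}_0A$ by (i). More precisely, $\zeta\circ\eta=\pi\circ(\text{projection }{\C}^m_z\times{\C}^k_w\to{\C}^m_z)$ on $\Gamma_f$, and since $\eta|_{\Gamma_f}$ has multiplicity $1$ the multiplicities of $\zeta|_\Gamma$ and of $\pi|_A$ agree. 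Collecting, all three hold simultaneously for coordinates avoiding a nowhere-dense algebraic subset of the Grassmannian of coordinate systems.

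The main obstacle I expect is bookkeeping the genericity: one must check that the ``good'' coordinate systems for (i), for the fibre-injectivity underlying (ii), and for (iii) are each complements of a nowhere-dense set in the same parameter space (the space of linear coordinate changes, equivalently a product of Grassmannians), and that intersecting them leaves something nonempty — indeed residual. Showing fibre-injectivity of $\eta|_{\Gamma_f}$ is a \emph{generic} property (rather than just ``for some projection arbitrarily close'') requires re-running the argument of the Section 2 Theorem and observing that the bad projections form a proper analytic subset, which uses that $\Gamma_f$ is analytic (Theorem 1.2) and that the set of projections gluing two fixed sheets is closed with empty interior unless $f$ is constant on a component — exactly the contradiction reached in that proof. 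The multiplicativity of multiplicities along a tower of branched coverings (used in (ii) and (iii)) is standard (cf. \cite{L}, \cite{Ch}) and I would merely cite it.
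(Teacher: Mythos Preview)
Your outline is sound and would go through, but it attacks (ii) from the opposite end than the paper does, and with a heavier tool.

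The paper does \emph{not} invoke Theorem~2.1 (or Corollary~\ref{injekcja}) here at all. Instead it first proves the tiny Lemma~\ref{lemmacik}: a generic linear epimorphism ${\C}^m\to{\C}^k$ separates any fixed finite set. It then picks one value $w_0$ for which $\#f^{-1}(w_0)=m_0(f)$ and chooses $\pi$ generically so that (i) holds \emph{and} $\pi$ is injective on the single finite set $f^{-1}(w_0)$. Since $\varrho^{-1}(w)\cap\Gamma=\pi(f^{-1}(w))\times\{w\}$, this immediately yields $\mu_0(\varrho|_\Gamma)=m_0(f)$, and then $\mu_0(\eta|_{\Gamma_f})=1$ drops out of the tower $p=\varrho\circ\eta$ --- the reverse direction from your deduction. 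For (iii) the paper argues essentially as you do, via $\mu_0(\eta|_{\Gamma_f})=1$ and the tower $\pi=\zeta\circ\eta$ (written out pointwise rather than by citing multiplicativity).

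What each approach buys: the paper's route is entirely elementary and self-contained (Lemma~\ref{lemmacik} is two lines), and it sidesteps both obstacles you correctly anticipated --- extending Theorem~2.1 to $f$ with values in ${\C}^k$ rather than ${\C}$, and upgrading ``some $\pi'$ arbitrarily close'' to ``generic $\pi$''. Your route is conceptually cleaner (one genericity statement gives everything) but leans on the substantially harder Section~2 theorem, which is overkill: you only need to separate \emph{one} fibre of $f$, not achieve injectivity of $f$ on the generic fibre of $\pi$.
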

Here $\mu_0(\pi|_A)$ denotes the {\it covering number} of the branched covering $\pi|_A$ with $0$ as the unique point in the fibre $\pi^{-1}(0)$ (see \cite{Ch}) while $m_0(f):=\mu_0(p|_{\Gamma_f})$ is the geometric multiplicity of $f$ at zero.

So as to prove this proposition we begin with a most easy lemma:
\begin{lem}\label{lemmacik}
If $E\subset {\C}^m$ is such that
$\# E=\mu>0$ and $k\leq m$, then for the generic
epimorphism $L\in \mathrm{L}({\C}^m,{\C}^k)$ one has
$\# L(E)=\mu$.
\end{lem}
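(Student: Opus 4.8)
The plan is to view the lemma as a statement about separating finitely many points by a generic linear map, and to reduce it to the simplest possible case: separating \emph{two} points by a generic linear functional. Write $E=\{a_1,\dots,a_\mu\}\subset{\C}^m$ with the $a_i$ pairwise distinct. For a linear epimorphism $L\in\mathrm{L}({\C}^m,{\C}^k)$ we have $\#L(E)=\mu$ precisely when $L(a_i)\neq L(a_j)$ for all $i\neq j$, i.e.\ when $a_i-a_j\notin\mathrm{Ker}\,L$ for all $i<j$. So the condition to avoid is that $\mathrm{Ker}\,L$, which is an $(m-k)$-dimensional subspace of ${\C}^m$, contains one of the finitely many nonzero vectors $v_{ij}:=a_i-a_j$.

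First I would treat the case $k=1$, so that $L$ is a nonzero linear functional, identified with a point $[\xi]\in\PP_{m-1}$ via $L(z)=\langle\xi,z\rangle$ (in a fixed bilinear pairing). For each fixed nonzero $v_{ij}$, the set $\{[\xi]\mid \langle\xi,v_{ij}\rangle=0\}$ is a hyperplane in $\PP_{m-1}$, hence a proper algebraic subset. Taking the union over the finitely many pairs $i<j$ gives a proper algebraic (in particular nowhere dense) subset of $\PP_{m-1}$, and for any $\xi$ outside it $L$ separates all the points of $E$; this is exactly the asserted genericity. For general $k$, the space $\mathrm{L}({\C}^m,{\C}^k)$ of epimorphisms is a Zariski-open subset of the matrix space ${\C}^{k\times m}$, and the condition $L(a_i)=L(a_j)$ for a fixed pair cuts out the proper algebraic subset $\{L\mid L\,v_{ij}=0\}$ (it is proper since, $v_{ij}$ being nonzero, there is certainly \emph{some} epimorphism not killing it — e.g.\ complete $v_{ij}/|v_{ij}|$ to a basis and take the corresponding coordinate projection). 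Again the union over the finitely many pairs is a proper algebraic subset, so its complement is dense and its restriction to the open set of epimorphisms is the required generic set.

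Alternatively, and perhaps more in the spirit of the paper, one can argue by induction on $\mu$: if $L_0$ separates $a_1,\dots,a_{\mu-1}$, the vectors $v_{i\mu}$ lie outside a proper subspace for a small perturbation of $L_0$, and transversality is an open dense condition on the Grassmannian $G_{m-k}({\C}^m)$ of kernels. I do not expect any real obstacle here: the only point that needs a line of justification is that for each nonzero $v$ the set of epimorphisms with $v\in\mathrm{Ker}\,L$ is a \emph{proper} algebraic subset (nonempty complement), which is immediate from the existence of a coordinate projection not annihilating $v$. Everything else is the standard fact that a finite union of proper algebraic subsets of an irreducible variety is nowhere dense.
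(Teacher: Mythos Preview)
Your proof is correct and follows essentially the same idea as the paper's: both observe that $\#L(E)<\mu$ precisely when $\mathrm{Ker}\,L$ contains one of the finitely many nonzero differences $a_i-a_j$, and that this is a proper (algebraic) condition on $L$. The only cosmetic difference is in the reduction step --- the paper reduces to $k=m-1$ (so $\mathrm{Ker}\,L$ is a line and the bad lines form a finite subset of $G_1({\C}^m)$), whereas you handle general $k$ directly in the matrix space ${\C}^{k\times m}$.
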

\begin{proof}
It suffices to prove the assertion for
$k=m-1$. The set $\{\ell\in G_1({\C}^m)\mid \exists x,y\in E\colon
x\not= y, x\in \ell+y\}$ is finite. Thus for the generic $\ell\in
G_1({\C}^m)$ the set $\bigcup_{x\in E} x+\ell$ consists of $\mu$
distinct lines. The orthogonal projection $\pi^\ell$ along $\ell$ is
hence the sought after epimorphism.
\end{proof}

\begin{proof}[Proof of proposition \ref{Rzuty}.]
We know that for the
generic projection $\pi$ we have $\pi^{-1}(0)\cap C_0(A)=\{0\}$ (cf. [Ch]).
Let us take such a projection which in addition `separates' the
points in the maximal fibre of $f$, i.e. for $w$ such that
$f^{-1}(w)$ consists of $m_0(f)$ points, $\pi(f^{-1}(w))$ consists
also of $m_0(f)$ points (cf. the previous lemma). That means $\pi|_{f^{-1}(w)}$ is an injection. 

Observe that $p=\varrho\circ\eta$ and $p|_{\Gamma_f}$,
$\eta|_{\Gamma_f}$, $\varrho|_{\Gamma}$ are proper. Moreover
$m_0(f)=\mu_0(p|_{\Gamma_f})$. 
Now it remains to observe that
$$\varrho^{-1}(w)\cap\Gamma=\pi(f^{-1}(w))\times\{w\}$$
and so by the choice of $\pi$ we have
$\mu_0(\varrho|_\Gamma)=m_0(f)$. Thence 
$\mu_0(\eta|_{\Gamma_f})=1$, which means that
$\eta\colon\Gamma_f\to\Gamma$ is one-to-one. Indeed, if $(x_0,w_0)\in{\C}^k\times{\C}^k$ is fixed, 
\begin{align*}
\#\eta^{-1}(x_0,w_0)\cap \Gamma_f&=\#\{y\in{\C}^{m-k}\mid (x_0,y)\in A,\ f(x_0,y)=w_0\}=\\
&=\#\{z\in f^{-1}(w_0)\mid \pi(z)=x_0\}=\\
&=\#f^{-1}(w_0)\cap \pi^{-1}(x_0).
\end{align*}
The latter is equal to one iff $f|_{\pi^{-1}(x_0)\cap A}$ is injective which is equivalent to $\pi|_{f^{-1}(w_0)}$ being an injection. That we know to be true. Thus, in particular, for any
$x,w\in{\C}^k$ there exists exactly one $y\in{\C}^{m-k}$ such that
$f(x,y)=w$.

Therefore $\mu_0(\zeta|_\Gamma)={\deg}_0 A=:d$. Indeed, if
we take $x_0\in{\C}^k$ near zero such that $\#
\pi^{-1}(x_0)\cap A=d$, then obviously $\#
f(\pi^{-1}(x_0)\cap A)\leq d$. On the other hand if there were
$y\not=y'$ such that $f(x_0,y)=f(x_0,y')=:w_0$, then the set
$\eta^{-1}(x_0,w_0)\cap\Gamma_f$ would include the two points
$(x_0,y,w_0)\not=(x_0,y',w_0)$. If this held true for $x_0$
arbitrarily close to zero, then this would contradict
$\mu_0(\eta|_{\Gamma_f})=1$.
\end{proof}

\smallskip
\noindent{\bf Note.} It may be useful, in reference to \cite{D}, to observe that in the situation from the proposition above it is easy to check that the {\sl \L ojasiewicz exponent} $\mathcal{L}(f;0)={1}/{q_0(\Gamma,\varrho)}$ (with the notations from theorem (2.6) in \cite{D}).

\section{C-holomorphic functions with algebraic graphs}

Let $|\cdot|$ denote any of the usual norms on ${\C}^m$ (we shall not distinguish in notation the norms for different $m$ as long as there is no real need for such a distinction). We begin with the following Liouville-type lemma concerning c-holo\-morph\-ic mappings whose 
graphs are algebraic sets (it is a consequence of the Rudin-Sadullaev criterion):

\begin{lem}\label{2.1} Let $A\subset {\C}^m$ be an analytic set and let 
$f\in{\ch}(A,{\C}^n)$. Then $\Gamma_f$ is algebraic if and only if $A$ is algebraic and there are 
constants $M,s>0$ such that 
$$|f(x)|\leq M(1+|x|^s),\quad \textit{for}\ x\in A.$$
\end{lem}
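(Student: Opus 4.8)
The plan is to prove the two implications separately, with the forward direction being the substantive one. Assume first that $\Gamma_f$ is algebraic. Then $A=\pi(\Gamma_f)$, where $\pi\colon{\C}^m\times{\C}^n\to{\C}^m$ is the natural projection; since the image of an algebraic set under a linear (hence polynomial) map is constructible and, being also analytic by c-holomorphy considerations — or simply because $A$ is closed and the projection is proper on $\Gamma_f$ when we think of $f$ as bounded on compacts — one concludes $A=\overline{\pi(\Gamma_f)}$ is algebraic. (Alternatively invoke that $\Gamma_f$ algebraic of pure dimension $k$ projects properly onto $A$, so $A$ is algebraic of dimension $k$.) For the growth bound: because $\Gamma_f$ is algebraic of pure dimension $k$, it has a finite projective degree $\deg\Gamma_f=:D$, and by the discussion recalled in the introduction (the Tworzewski–Winiarski estimate applied to $\pi|_{\Gamma_f}$, exactly as in the Remark following Proposition \ref{stopien wykresu}) one gets
$$
\Gamma_f\subset\{(x,y)\in{\C}^m\times{\C}^n\mid |y|\leq C(1+|x|)^{D}\}
$$
for some $C>0$, i.e. $|f(x)|\leq C(1+|x|)^{D}\leq M(1+|x|^{s})$ with $s=D$, $M=C2^{D}$ (or similar). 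This half is essentially a citation of material already in the paper.

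For the converse, assume $A$ is algebraic of pure dimension $k$ and $|f(x)|\leq M(1+|x|^{s})$ on $A$. Then $\Gamma_f$ is a pure $k$-dimensional analytic subset of ${\C}^m\times{\C}^n$ (it is analytic by Theorem 1.2, and $\pi|_{\Gamma_f}$ is a biholomorphism onto $A$ off a thin set, so $\dim\Gamma_f=\dim A=k$). To upgrade analyticity to algebraicity I would use the Rudin–Sadullaev criterion, as the lemma's statement already flags: a pure-dimensional analytic subset $V$ of ${\C}^N$ is algebraic iff its projective closure $\overline V$ meets the hyperplane at infinity in a set of the expected dimension $\dim V-1$ (equivalently, $V$ has finite volume growth of polynomial type / the trace of $V$ on generic affine subspaces is bounded). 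Concretely, the polynomial bound on $f$ forces $\Gamma_f^*$ (the cone of limit directions at infinity, in the notation of the introduction) to be contained in $\{A^*\}\times\{0\}$-type set: if $(v,w)\in\Gamma_f^*$ arises from $\Gamma_f\ni(x_\nu,f(x_\nu))\to\infty$ with $\lambda_\nu(x_\nu,f(x_\nu))\to(v,w)$, then $|\lambda_\nu f(x_\nu)|\leq M|\lambda_\nu|(1+|x_\nu|^{s})$; one sees $|x_\nu|\to\infty$ is forced (else $f$ bounded, handled separately), and then $\lambda_\nu x_\nu\to v$ with $v\neq 0$ forces $|\lambda_\nu|\sim |v|/|x_\nu|$, so $|\lambda_\nu f(x_\nu)|\lesssim |x_\nu|^{s-1}\to\infty$ unless one rescales — this is exactly the spot requiring care. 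The cleaner route: apply Rudin–Sadullaev directly to $\Gamma_f$ regarded as a graph, using that $A$ algebraic gives $A\subset\{|x+u|\leq\mathrm{const.}(1+|\cdot|)\}$-type containment along a generic projection realizing $\deg A$, and the bound on $f$ then yields a global polynomial containment for $\Gamma_f$ in a product of such cones, which by Rudin–Sadullaev (or by the projective-closure criterion above) gives $\overline{\Gamma_f}$ analytic in ${\mathbb P}_{m+n}$, hence $\Gamma_f$ algebraic by Chow's theorem.

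The main obstacle I anticipate is verifying the precise hypothesis of the Rudin–Sadullaev criterion: one must show that the polynomial growth of $f$ together with algebraicity of $A$ implies that $\overline{\Gamma_f}\cap H_\infty$ has dimension $k-1$ (not larger), i.e. that $\Gamma_f$ does not ``spread out'' too much near infinity in the fibre directions. The estimate $|f(x)|\leq M(1+|x|^{s})$ says precisely that in the projective picture the closure of $\Gamma_f$ near $H_\infty$ stays within a neighbourhood (of controlled order) of $\overline A_\infty\times\{0\}$ inside $H_\infty\cong{\mathbb P}_{m+n-1}$; making this rigorous amounts to a Phragmén–Lindelöf / homogenization bookkeeping at infinity. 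Everything else — purity and dimension of $\Gamma_f$, the reduction of the converse to the bounded-$f$ case (where $f$ extends to a holomorphic, hence by Liouville on the irreducible components constant, map, and $\Gamma_f$ is a translate of $A$), and the forward direction — is routine and already prepared by the machinery recalled above.
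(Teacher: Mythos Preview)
Your approach is essentially the paper's: Chevalley plus a growth estimate for the forward direction, Rudin--Sadullaev via a polynomial-cone containment for the converse. Two minor corrections: the forward direction is actually the routine half (the paper dispatches it in two lines by citing \cite{L} VII.7.2), not the substantive one; and your $\Gamma_f^*$ detour does not lead anywhere when $s>1$ --- but the ``cleaner route'' you eventually settle on is precisely what the paper carries out: choose coordinates so that $A\subset\{(u,v)\in{\C}^k\times{\C}^{m-k}\mid |v|\leq M'(1+|u|)\}$, combine this with $|f(u,v)|\leq M(1+|(u,v)|^s)$ to obtain $|v|+|f(u,v)|\leq C(1+|u|)^{s'}$ on $\Gamma_f$ for $s'=\max\{1,s\}$, and then apply Rudin--Sadullaev directly.
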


\begin{proof} 
A simple yet important observation is that given $f\in{\ch}(A,{\C}^n)$, $S\subset \Gamma_f$ is an irreducible component of the graph iff $S=\Gamma_{f|_T}$ and $T\subset A$ is an irreducible component of $A$ (see \cite{Wh}; essentially, it is enough to remark that over a regular point of $A$ we have necessarily a regular point of the graph).

For the `only if' part remark that by Chevalley's Theorem, $A$ is algebraic as the 
proper projection of the graph and so it has finitely many irreducible components. This allows us to assume that $A$ has pure dimension $k\geq 1$. Then, by \cite{L} VII.7.2 we get immediately 
$$\Gamma_f\subset\{(z,w)\in{\C}^m\times{\C}^n\mid |w|\leq M(1+|z|^s)\}$$
for some $M,s>0$.

To prove the `if' part we observe that $\Gamma_f$ has only finitely many irreducible components and again we may assume that $A$ is pure $k$-dimensional (with $k\geq 1$). Then we apply \cite{L} VII.7.4 to $A$ (we may assume now that the considered norms are the $\ell_1$ norms i.e. sum of moduli of the coordinates):
$$A\subset\{(u,v)\in{\C}^k\times{\C}^{m-k}\mid |v|\leq M'(1+|u|)\}$$
(in well-chosen coordinates) for some $M'>0$. Take now $(u,v,w)\in\Gamma_f$, we have then 
$|f(u,v)|\leq M(1+|(u,v)|^s)=M[1+(|u|+|v|)^s]$ and by an easy computation:
\begin{align*}
|f(u,v)|+|v|&\leq M[1+(|u|+|v|)^s]+M'(1+|u|)\leq\\
&\leq 3C(1+|u|)^{s'},
\end{align*}
for $s':=\max\{1,s\}$ and $C:=\max\{M,M(M'+1)^s,M'\}$. Now Rudin-Sadullaev criterion yields $\Gamma_f$ 
algebraic. \end{proof}

\begin{rem}\label{rwz} The condition `$A$ is algebraic' in the 
equivalence is not redundant since any polynomial restricted to e.g. $A=\{y=e^x\}$ satisfies the inequality but has a non algebraic graph (otherwise $A$ would be algebraic too, by the Chevalley-Remmert Theorem).

Note also that $|f(x)|\leq M(1+|x|^s)$ on $A$ iff $|f(x)|\leq M(1+|x|)^s$ on $A$.

More generally, it is a mere exercise to check that if $X\subset {\C}_x^m\times{\C}_y^n$ is a closed set with proper projection $\pi(x,y)=x$, then the following three conditions are equivalent:\begin{enumerate}
\item $\exists R\geq 1, s\geq 0, C>0\colon |y|\leq C|x|^s$, when $(x,y)\in X$ with $|x|\geq R$;
\item $\exists s\geq 0, C>0\colon |y|\leq C(1+|x|^s),\> (x,y)\in X$;
\item $\exists s\geq 0\colon |y|\leq C(1+|x|)^s,\> (x,y)\in X$.
\end{enumerate}
\end{rem}

Hereafter we are interested in particular in {\it c-holomorphic functions with algebraic graphs} which we will call {\it c-algebraic} for short (\footnote{Since `c' in`c-holomorphic' stands for `continuous' we were tempted to propose, in a rather subversive manner, to call such functions {\it al-co-holomorphic functions} (short for `algebraic continuous holomorphic functions'). But one should always resist to temptations.}). We will denote their ring by $\mathcal{O}_c^\mathrm{a}(A)$ when $A\subset{\C}^m$ is a fixed algebraic set. As a matter of fact, we will assume most of the time that $A\subset{\C}^m$ is a pure $k$-dimensional algebraic set of degree $d:=\deg A$ (meaning the degree of the projective completion of $A$). Obviously, we shall assume also $k\geq 1$ unless something else is stated.

For a mapping $f\in\mathcal{O}_c(A,{\C}^k)$, having an algebraic graph is clearly equivalent to each component $f_j$ of $f$ having an algebraic graph. We shall write then $f\in{\oa}(A,{\C}^n)$.

Observe that $P\circ f\in\mathcal{O}_c^\mathrm{a}(A)$, if $f\in\mathcal{O}_c^\mathrm{a}(A)$ and $P$ is a polynomial. More generally, due to the Chevalley-Remmert Theorem, we have an c-algebraic counterpart of Proposition \ref{zlozenie}:
\begin{prop}\label{Zlozenie}
Assume that $X$ and $Y$  are algebraic subsets  of ${\C}^m$ and ${\C}^n$ respectively, and $f\colon X\to Y$ and $g\colon Y\to {\C}$ are c-algebraic mappings. Then $g\circ f\in\mathcal{O}^\mathrm{a}_c(X)$.
\end{prop}
\begin{proof}
The graph of $g\circ f$ is indeed algebraic by the Chevalley-Remmert Theorem as the the proper projection by $\pi(x,y,z)=(x,z)$, where $(x,y,z)\in {\C}^m\times{\C}^n\times{\C}$, of the algebraic set $(\Gamma_f\times{\C})\cap (X\times\Gamma_g)$.
\end{proof}

In view of Lemma \ref{2.1} and in connection with Strzebo\'nski's paper \cite{S}, for any $f\in\mathcal{O}_c^\mathrm{a} (A)$ with any algebraic set $A$, we introduce its {\it growth exponent}
$$
\mathcal{B}(f):=\inf\{s\geq 0\mid |f(x)|\leq C(1+|x|)^s,\ \hbox{\it on}\ A\ \hbox{\it with some constant}\ C>0\}
$$
and the set of all possible growth exponents on $A$:
$$
\mathcal{B}_A:=\{\mathcal{B}(f)\mid f\in\mathcal{O}_c(A)\colon \Gamma_f\ \hbox{\it is algebraic}\}.
$$
Observe that there is in fact 
$$
\mathcal{B}(f)=\inf\{s\geq 0\mid |f(x)|\leq \mathrm{const.}|x|^s,\ x\in A\colon |x|\geq M\ \hbox{\it with some}\ M\geq 1\}.
$$

Recall the following important lemma from \cite{S}:

\begin{lem}[\cite{S} Lemma 2.3]\label{Adam} 
Let $P(x,t)=t^d+a_1(x)t^{d-1}+\ldots+a_d(x)$ be a polynomial with $a_j\in{\C}[x_1,\dots,x_k]$. Then $\delta(P):=\max_{j=1}^d ({\deg a_j}/{j})$ is the minimal exponent $s>0$ for which the inclusion 
$$
P^{-1}(0)\subset \{(x,t)\in{\C}^k\times{\C}\mid |t|\leq C(1+|x|)^s \}
$$
holds with some $C>0$.
\end{lem}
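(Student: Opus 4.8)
The statement to prove is Strzeboński's Lemma 2.3: for $P(x,t)=t^d+a_1(x)t^{d-1}+\dots+a_d(x)$ with $a_j\in\C[x_1,\dots,x_k]$, the number $\delta(P)=\max_{j}(\deg a_j/j)$ is the minimal exponent $s>0$ for which $P^{-1}(0)\subset\{|t|\le C(1+|x|)^s\}$ holds with some $C>0$. Since this is a quoted lemma I would give a short self-contained argument in two directions.

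For the inclusion with $s=\delta(P)$: fix $x\in\C^k$ and let $t$ be a root of $P(x,\cdot)$. The classical Cauchy bound on roots of a monic polynomial gives $|t|\le 2\max_{j}|a_j(x)|^{1/j}$ (or any of the standard variants). Since $a_j$ is a polynomial of degree $\deg a_j\le j\,\delta(P)$, we have $|a_j(x)|\le C_j(1+|x|)^{\deg a_j}\le C_j(1+|x|)^{j\delta(P)}$, hence $|a_j(x)|^{1/j}\le C_j^{1/j}(1+|x|)^{\delta(P)}$. Taking the maximum over $j$ and absorbing constants yields $|t|\le C(1+|x|)^{\delta(P)}$, which is the desired inclusion. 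This direction is essentially a one-line estimate.

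For minimality, i.e.\ that no $s<\delta(P)$ works: pick an index $j_0$ realizing the maximum, so $\deg a_{j_0}=j_0\delta(P)$. The idea is that along a generic line $x=\lambda v$ through the origin (with $v$ chosen so that the leading form $a_{j_0}^{+}(v)\ne 0$), the polynomial $P(\lambda v,t)$ in $t$ has a root growing like $|\lambda|^{\delta(P)}$. Concretely, homogeneity considerations on $P(\lambda v,t)$ — comparing the Newton-polygon slope contributed by $a_{j_0}(\lambda v)\sim c\lambda^{j_0\delta(P)}$ against the slopes from the other coefficients — show that at least one branch $t(\lambda)$ of $P(\lambda v,\cdot)^{-1}(0)$ satisfies $|t(\lambda)|\ge c'|\lambda|^{\delta(P)}$ for $|\lambda|$ large, because otherwise all $d$ roots would be $o(|\lambda|^{\delta(P)})$ and the product of the roots, which is $\pm a_d(\lambda v)$, could not have the correct order $\lambda^{j_0\delta(P)}$ forced when $j_0=d$; for general $j_0$ one argues with the elementary symmetric function $a_{j_0}$ of the roots in the same way, or equivalently with the Newton polygon of $P(\lambda v,\cdot)$ at infinity. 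Thus $(\lambda v, t(\lambda))\in P^{-1}(0)$ with $|t(\lambda)|/(1+|\lambda v|)^s\to\infty$ whenever $s<\delta(P)$, ruling out any smaller exponent.

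I expect the second (minimality) direction to be the only real obstacle: one must make sure the chosen direction $v$ is generic enough that the leading coefficient $a_{j_0}^{+}(v)$ does not vanish and that no cancellation among the symmetric functions kills the growth, and then extract an honest lower bound on one root — cleanest via the Newton polygon of $P(\lambda v,t)$ viewed as a polynomial in $t$ over the field of Puiseux series in $1/\lambda$, where the top slope equals exactly $\delta(P)$ by the definition of $\delta(P)$ and the genericity of $v$. The ``only if'' / upper-bound half is routine. Alternatively, since this lemma is explicitly attributed to \cite{S}, one may simply cite it; but the short argument above makes the section self-contained.
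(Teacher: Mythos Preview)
The paper does not prove this lemma at all: it is simply recalled from \cite{S}, and the very next line observes that it is ``merely an avatar'' of P\l oski's Lemma~\ref{P}, also quoted without proof. So there is no argument in the paper to compare against, and your final remark (``one may simply cite it'') is in fact exactly what the paper does.

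That said, your self-contained sketch is correct. The upper bound via the Cauchy--Fujiwara root estimate $|t|\le 2\max_j|a_j(x)|^{1/j}$ is the standard route and works exactly as you wrote. For minimality, your symmetric-function argument is already enough and does not really need the Newton polygon: if $j_0$ realises $\delta(P)$ and $v$ is chosen with $a_{j_0}^{+}(v)\neq 0$, then $|a_{j_0}(\lambda v)|\ge c|\lambda|^{j_0\delta(P)}$ for large $|\lambda|$; were every root $t_i(\lambda)$ of $P(\lambda v,\cdot)$ to satisfy $|t_i(\lambda)|=o(|\lambda|^{\delta(P)})$, the $j_0$-th elementary symmetric function of the roots, which equals $\pm a_{j_0}(\lambda v)$, would be $o(|\lambda|^{j_0\delta(P)})$, a contradiction. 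One small point worth making explicit in a clean write-up: the genericity of $v$ only needs $a_{j_0}^{+}(v)\neq 0$; the other coefficients automatically satisfy $\deg_\lambda a_j(\lambda v)\le \deg a_j\le j\,\delta(P)$, so no extra genericity is required to control them.
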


Note that in view of Remark \ref{rwz} it is merely an avatar of the following P\l oski's crucial lemma:
\begin{lem}[\cite{P} lemma (2.1)]\label{P} If $P(x,t)$ is as in the preceding lemma, then $\delta(P)$ is the minimal exponent $q>0$ such that 
$$
\{(x,t)\in{\C}^k\times{\C}\mid P(x,t)=0, |x|\geq R\}\subset\{(x,t)\in{\C}^k\times{\C}\mid |t|\leq C|x|^q\}
$$
for some $R, C>0$. 
\end{lem}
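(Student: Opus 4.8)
The plan is to deduce P\l{}oski's Lemma \ref{P} from Strzebo\'nski's Lemma \ref{Adam} (which we are allowed to assume), since the paper itself signals that the two are essentially equivalent. Concretely, I would show that for a monic polynomial $P(x,t)=t^d+a_1(x)t^{d-1}+\dots+a_d(x)$ the following three quantities coincide: (a) $\delta(P)=\max_j \deg a_j/j$; (b) the minimal $s>0$ with $P^{-1}(0)\subset\{|t|\le C(1+|x|)^s\}$ for some $C>0$; (c) the minimal $q>0$ with $P^{-1}(0)\cap\{|x|\ge R\}\subset\{|t|\le C|x|^q\}$ for some $R,C>0$. Lemma \ref{Adam} already gives $(a)=(b)$, so it remains to prove $(b)=(c)$, and this is exactly the ``preceding observation'' alluded to in the text, namely that $|f(x)|\le M(1+|x|^s)$ on $A$ iff $|f(x)|\le M'(1+|x|)^s$ on $A$ iff $|f(x)|\le\mathrm{const.}|x|^s$ for $|x|$ large. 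I would simply record that elementary equivalence of growth conditions and apply it with $A=P^{-1}(0)$ and $f$ the (multivalued, but that is harmless since we only estimate $|t|$) selection of roots $t$.

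First I would prove the easy inclusion $(c)\le(b)$: if $P^{-1}(0)\subset\{|t|\le C(1+|x|)^s\}$ then for $|x|\ge 1$ we have $1+|x|\le 2|x|$, hence $|t|\le C\,2^s|x|^s$ on $P^{-1}(0)\cap\{|x|\ge 1\}$, so $(c)$ holds with $q=s$, $R=1$, $C'=2^sC$; taking infima gives the minimal $q$ is $\le$ the minimal $s$. For the reverse $(b)\le(c)$: suppose $P^{-1}(0)\cap\{|x|\ge R\}\subset\{|t|\le C|x|^q\}$. On the complementary region $\{|x|<R\}$ the set $P^{-1}(0)$ is still closed, and since $P$ is monic in $t$ of degree $d$, for each such $x$ every root satisfies the classical Cauchy bound $|t|\le 1+\max_j|a_j(x)|$, which is bounded on the compact set $\{|x|\le R\}$ by some constant $C_1$. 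Combining the two regions, $|t|\le \max\{C_1, C\}\,(1+|x|)^q$ on all of $P^{-1}(0)$; hence the minimal $s$ in $(b)$ is $\le q$. Taking the infimum over admissible $q$ finishes $(b)\le(c)$, so $(b)=(c)$, and chaining with Lemma \ref{Adam} yields $\delta(P)=q_{\min}$, which is the assertion.

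The only genuinely non-routine point is making sure the bound on the region $\{|x|\le R\}$ is legitimate: one must note that $P$ being monic of degree $d$ in $t$ guarantees that $P^{-1}(0)$ is a (branched) covering over ${\C}^k_x$, so over the compact polydisc $\{|x|\le R\}$ there are at most $d$ roots for each $x$ and they stay bounded; this is where monicity of $P$ is used and is the same hypothesis that makes Lemma \ref{Adam} work. Everything else is the elementary manipulation of $(1+|x|)$ versus $|x|$ for large $|x|$, already invoked twice in the text, so there is no real obstacle; I would simply write the two chains of inequalities and cite Lemma \ref{Adam} for the identification of the infimum with $\delta(P)$.

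\begin{proof}
By Lemma \ref{Adam}, $\delta(P)$ is the least $s>0$ for which
$P^{-1}(0)\subset\{|t|\le C(1+|x|)^s\}$ with some $C>0$; thus it suffices to
check that this least exponent equals the least $q>0$ for which
$P^{-1}(0)\cap\{|x|\ge R\}\subset\{|t|\le C|x|^q\}$ with some $R,C>0$.

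If $P^{-1}(0)\subset\{|t|\le C(1+|x|)^s\}$, then for $|x|\ge 1$ we have
$1+|x|\le 2|x|$, whence $|t|\le 2^sC\,|x|^s$ on $P^{-1}(0)\cap\{|x|\ge 1\}$;
so the least admissible $q$ does not exceed the least admissible $s$.

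Conversely, suppose $P^{-1}(0)\cap\{|x|\ge R\}\subset\{|t|\le C|x|^q\}$.
Since $P$ is monic of degree $d$ in $t$, for each $x$ every root $t$ of
$P(x,\cdot)$ satisfies the Cauchy bound $|t|\le 1+\max_{j}|a_j(x)|$, and the
right-hand side is bounded on the compact set $\{|x|\le R\}$ by some
$C_1>0$. Hence on $P^{-1}(0)$ we get $|t|\le C_1$ for $|x|\le R$ and
$|t|\le C|x|^q\le C(1+|x|)^q$ for $|x|\ge R$, so
$P^{-1}(0)\subset\{|t|\le\max\{C_1,C\}\,(1+|x|)^q\}$; thus the least
admissible $s$ does not exceed $q$.

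Taking infima in both inequalities shows that the least $q$ with the stated
inclusion equals the least $s$ in Lemma \ref{Adam}, i.e. equals $\delta(P)$.
\end{proof}
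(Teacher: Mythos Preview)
Your proposal is correct and matches exactly what the paper indicates: the paper does not give its own proof of Lemma~\ref{P} (it is quoted from \cite{P}), but only remarks that ``in view of our preceding observation it is merely an avatar'' of Lemma~\ref{Adam}, the observation being the equivalence between the growth conditions $|t|\le C(1+|x|)^s$ globally and $|t|\le C'|x|^s$ for $|x|$ large. Your argument simply writes out that equivalence in both directions, using monicity (via the Cauchy root bound) to handle the compact region $\{|x|\le R\}$, which is precisely the intended one-line reduction.
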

\begin{proof}
For the convenience of the reader we just recall that the proof boils down to observing that any root $t$ of $P(x,t)=0$ with $x$ fixed satisfies $|t|\leq 2\max|a_j(x)|^{1/j}$, and the estimates $|a_j(x)|\leq C_j|x|^{\deg a_j}$ for $|x|\gg 1$.
\end{proof}

From this we easily obtain:
\begin{prop}\label{osiaganie}
Given $f\in{\oa}(A)$, the least upper bound in the definition of $\mathcal{B}(f)$ is attained and a rational number $p/q\geq 0$ with $1\leq q\leq d$ where $d$ is the maximum of the degrees of the components of $A$.
\end{prop}
\begin{proof}
First observe that for the decomposition $A=\bigcup A_j$ into irreducible components, we get $\mathcal{B}(f)=\max \mathcal{B}(f|_{A_j})$. Indeed, since a growth inequality satisfied by $f$ is satisfied by each $f|_{A_j}$ which gives `$\geq$'. On the other hand, if each $f|_{A_j}$ satisfies an inequality at infinity with some $s_j\geq 0$, then for $x\in A$ large enough, $|f(x)|\leq \mathrm{const.}|x|^{\max s_j}$ from which we infer `$\leq$'. 

We may thus assume that $A$ is irreducible. Taking the image (\footnote{It is algebraic since $\pi\times\mathrm{id}_{\C}$ is proper on $\Gamma_f$ due to the continuity of $f$ and the choice of $\pi$.}) $\Gamma\subset{\C}^k\times{\C}$ of the graph $\Gamma_f$ by the projection $\pi\times\mathrm{id}_{{\C}}$, where $\pi\colon{\C}^m\to{\C}^k$ is a projection realizing $\deg A$, it suffices to observe that $\mathcal{B}(f)=\delta(P)$, where $P$ is the minimal polynomial describing the algebraic hypersurface $\Gamma$ and $\delta(P)$ the number from Lemma \ref{Adam}. 

Indeed, by the choice of $\pi$, up to a change of coordinates we may assume that it is the projection onto the first $k$ coordinates, $A\subset \{(x,y)\in{\C}^k\times{\C}^{m-k}\mid |y|\leq \mathrm{const.}(1+|x|)\}$. Therefore, given $(x,t)\in\Gamma$ we find $y\in{\C}^{m-k}$ such that $t=f(x,y)$ and thus, for any growth exponent $q$ of $f$ we get (cf. Remark \ref{rwz})
$$
|f(x,y)|\leq\mathrm{const.}(1+|x|+|y|)^q\leq\mathrm{const.}(1+|x|+(1+|x|))^q.
$$
Hence, $|t|\leq\mathrm{const.}(1+|x|)^q$. On the other hand, $\Gamma\subset\{(x,t)\in{\C}^k\times{\C}\mid |t|\leq\mathrm{const.}(1+|x|)^s\}$ readily implies, that for any $(x,y)\in A$, there is $|f(x,y)|\leq\mathrm{const.}(1+|x|+|y|)^s$, i.e. $s$ is a growth exponent of $f$. Eventually, $\mathcal{B}(f)=\delta(P)$, by Lemma \ref{Adam} (\footnote{Note that $\delta(P)$ does not depend on the choice of $\pi$,, any projection realizing $\deg A$ does the trick.}).

Since $\Gamma$ has proper projection onto ${\C}^k$, $P(x,t)$ written as a polynomial in $t$ with polynomial coefficients has to to be unitary (cf. \cite{TW1}). Its degree in $t$ cannot exceed the cardinality of $f(\pi^{-1}(x))$ for the generic $x$. The result follows.
\end{proof}

The growth exponent replaces in the c-holomorphic setting the notion of the {\it degree} of a polynomial (if $A={\C}^m$, then obviously $\mathcal{O}_c^\mathrm{a}(A)={\C}[x_1,\dots, x_m]$ and so $\mathcal{B}=\mathrm{deg}$ is the usual degree). 

For a  mapping $f=(f_1,\dots, f_n)$ with algebraic graph, defined on $A\subset{\C}^m$, $\mathcal{B}(f):=\max\mathcal{B}(f_j)$ coincides with the least upper bound of exponents $s>0$ for which $|f(x)|\leq\mathrm{const.}|x|^s$, for all $x\in A$ large enough. Similarly, it is easy to check that $\mathcal{B}(f)=\max\mathcal{B}(f|_{A_i})$ where $A=\bigcup_i A_i$ is the decomposition of $A$ into irreducible components.

From Lemma \ref{Adam} via Proposition \ref{osiaganie} we easily obtain also the c-holomorphic counterpart of Strze\-bo\'nski's result from \cite{S}:

\begin{prop}\label{AG2.3} 
We have $$\mathbb{Z}_+\subset\mathcal{B}_A\subset\{p/q\mid p,q\in\mathbb{N}\colon 1\leq q\leq d,\ p,q\ \hbox{\it relatively prime}\},$$
where $d$ is the maximum of degrees of all the irreducible components of $A$.\end{prop}

\begin{rem}\label{zap}
The second inclusion may be strict already when $A$ is an irreducible curve --- see Example \ref{kontr}.
\end{rem}

\smallskip
In the second part of this paper we shall need some more information about $\mathcal{B}(f)$. It is easy to see from the definition that for any $h_1, h_2\in\mathcal{O}_c^\mathrm{a}(A)$ there is $\mathcal{B}(h_1h_2)\leq \mathcal{B}(h_1)+\mathcal{B}(h_2)$ and  $\mathcal{B}(h_1+h_2)\leq\max\{\mathcal{B}(h_1),\mathcal{B}(h_2)\}$. But what will turn out to be most important is that for any positive integer $n$ there is $\mathcal{B}(f^n)=n\mathcal{B}(f)$.

\begin{ex}
Even on an irreducible set $A$ there can be $\mathcal{B}(h_1h_2)< \mathcal{B}(h_1)+\mathcal{B}(h_2)$. Consider the hyperbola $xy=1$ in ${\C}^2$ and $h_1(x,y)=x$, $h_2(x,y)=y$ on it. Then $\mathcal{B}(h_1h_2)=0$, while $\mathcal{B}(h_i)=1$, $i=1,2$.
\end{ex}

\begin{prop}
If $f\in\mathcal{O}_c^{\mathrm{a}}(A)$ is such that $\mathcal{B}(f)=0$, then $f|_{S}\equiv \mathrm{const.}$ for any irreducible component $S\subset A$.
\end{prop}
\begin{proof}
Fix an irreducible component $S\subset A$. Clearly, $\mathcal{B}(f|_S)=0$. Then for a generic proper projection $\pi$ realizing $\deg S$ as it covering number, we have in accordance with Lemma \ref{Adam} and Proposition \ref{osiaganie}, $\mathcal{B}(f|_S)=\delta(P)$ for the minimal polynomial $P$ describing the algebraic hypersurface $(\pi\times \mathrm{id}_{\C})(\Gamma_{f|_S})$. But then $P(x,t)$ has constant coefficients and so we may write $P(x,t)=P(t)=\prod_{j=1}^d (t-t_j)$ and obviously $\{t_1,\dots,t_d\}=f(S)$. By the connectedness of $S$ together with the continuity of $f$, we conclude that $f|_S\equiv\mathrm{const.}$
\end{proof}

It seems interesting to note that no such general Liouville-type result is known for c-holomorphic functions except for the following result based on Cynk's result from \cite{Ck}.
\begin{prop}
Let $A\subset{\C}^k\times{\C}^n$ be an irreducible analytic set of dimension $k$ with proper projection onto ${\C}^k$. Then any bounded c-holomorphic function on $A$ is constant.
\end{prop}
\begin{proof}
Let $f\in\mathcal{O}_c(A)$ be bounded. If $f$ is non-constant, then by \cite{Ck}, $f(A)={\C}\setminus Z$ where $Z$ is finite, which, of course, is impossible when $f$ is bounded.
\end{proof}

\smallskip
Using the results of \cite{TW1} we are able to give an estimate of $\mathcal{B}(f)$. Indeed, by applying \cite{TW1} Theorem 3 we get in Lemma \ref{Adam} above the estimate $\delta(P)\leq\deg P^{-1}(0)-d+1$. It remains to specify what actually $\deg P^{-1}(0)$ and $d$ are.

\smallskip

\begin{thm}\label{oszacowanie}
Let $f\in\mathcal{O}_c^\mathrm{a}(A)$ with $A\subset{\C}^m$ of dimension $k\geq 0$. Then 
$$\mathcal{B}(f)\leq \deg\Gamma_f-\deg A+1.$$
\end{thm}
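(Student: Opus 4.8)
The plan is to reduce the statement to Lemma~\ref{Adam} via the projection argument already sketched right before the theorem, and then to invoke \cite{TW1} theorem~3 to bound $\delta(P)$. First I would choose, using Corollary~\ref{injekcja} (applied componentwise, or rather to a generic linear combination of the components, so that one may assume $n=1$ after composing with a generic linear form --- recall $\mathcal{B}$ is unchanged under such a composition and that $P\circ f\in\mathcal{O}_c^\mathrm{a}(A)$), a generic system of coordinates ${\C}^m_z={\C}^k_x\times{\C}^{m-k}_y$ so that the projection $\pi(x,y)=x$ restricted to $A$ is proper and realizes $\mathrm{deg} A=:d$. Then I set $\Gamma:=(\pi\times\mathrm{id}_{\C})(\Gamma_f)\subset{\C}^k_x\times{\C}_t$; since the fibres $\pi^{-1}(x)\cap A$ have at most $d$ points and $f$ is injective on the generic such fibre, $p(x,t)=x$ restricted to $\Gamma$ is a $d$-sheeted proper projection, so $\Gamma$ is a hypersurface whose minimal defining polynomial $P(x,t)$ is monic in $t$ of degree exactly $d$, i.e. $P(x,t)=t^d+a_1(x)t^{d-1}+\dots+a_d(x)$.

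Next I would identify $\mathcal{B}(f)$ with $\delta(P)$. The growth of $f$ on $A$ is, up to the affine-linear bound $A\subset\{|y|\le\mathrm{const.}(1+|x|)\}$ coming from the choice of $\pi$, the same as the growth of $|t|$ in terms of $|x|$ on $\Gamma$: indeed $|f(x,y)|=|t|$ and $|x|\le|(x,y)|\le\mathrm{const.}(1+|x|)$, so $|f|\le C(1+|z|)^s$ on $A$ holds for a given $s$ iff $\Gamma\subset\{|t|\le C'(1+|x|)^s\}$ holds. By Lemma~\ref{Adam} the minimal such $s$ is $\delta(P)$, whence $\mathcal{B}(f)=\delta(P)$.

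Then I apply \cite{TW1} theorem~3, exactly as announced in the paragraph preceding the statement, to get $\delta(P)\le\mathrm{deg} P^{-1}(0)-d+1$, i.e. $\mathcal{B}(f)\le\mathrm{deg}\Gamma-d+1$. It remains to replace $\mathrm{deg}\Gamma$ by $\mathrm{deg}\Gamma_f$ and $d$ by $\mathrm{deg} A$. The latter is our choice $d=\mathrm{deg} A$. For the former I would observe that $\pi\times\mathrm{id}_{\C}$ is a \emph{linear} projection and that it is injective on $\Gamma_f$ for the generic choice above: injectivity of $f$ on generic fibres of $\pi$ means precisely that $(x,y,t)\mapsto(x,t)$ separates points of $\Gamma_f$ generically, and combined with genericity of the coordinates one arranges $\pi\times\mathrm{id}$ to be one-sheeted on $\Gamma_f$ (this is the content of $\mu_0$-type computations as in Proposition~\ref{Rzuty}, here in the global/affine setting). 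A linear projection that is generically one-to-one on a pure-dimensional algebraic set does not increase the projective degree, and in fact preserves it when it stays proper at infinity in the appropriate sense; since $\Gamma$ is the image, $\mathrm{deg}\Gamma\le\mathrm{deg}\Gamma_f$, which already suffices for the asserted inequality. Substituting gives $\mathcal{B}(f)\le\mathrm{deg}\Gamma_f-\mathrm{deg} A+1$.

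The main obstacle I anticipate is the bookkeeping around genericity: one must simultaneously choose coordinates so that (a) $\pi|_A$ realizes $\mathrm{deg} A$, (b) $f$ is injective on generic fibres of $\pi$ (Corollary~\ref{injekcja}), and (c) the induced projection $\pi\times\mathrm{id}_{\C}$ is one-to-one on $\Gamma_f$ so that $\mathrm{deg}\Gamma=\mathrm{deg}\Gamma_f$ rather than merely $\mathrm{deg}\Gamma\le\mathrm{deg}\Gamma_f$; and, if one wants to treat $n>1$ directly, one must check that passing to a generic linear form $\ell\circ f$ changes neither $\mathcal{B}(f)$ nor $\mathrm{deg}\Gamma_f$ (the graph of $\ell\circ f$ is the image of $\Gamma_f$ under a generically injective linear map, so its degree equals $\mathrm{deg}\Gamma_f$, and $\mathcal{B}(\ell\circ f)=\mathcal{B}(f)$ for generic $\ell$ since a generic linear form does not annihilate the dominant growth direction). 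Each of these is a finite intersection of Zariski-dense conditions on the Grassmannian, hence harmless, but writing it cleanly requires care. Everything else is a direct assembly of Lemma~\ref{Adam}, Corollary~\ref{injekcja}, and \cite{TW1}.
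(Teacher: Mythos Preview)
Your approach is essentially the paper's own: project via $\pi\times\mathrm{id}_{\C}$, identify $\mathcal{B}(f)=\delta(P)$, apply \cite{TW1}, and compare $\mathrm{deg}\Gamma$ with $\mathrm{deg}\Gamma_f$. Two remarks, one minor and one substantive.

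Minor: the theorem is stated for $f\in\mathcal{O}_c^\mathrm{a}(A)$, i.e.\ scalar-valued functions, so the whole discussion of $n>1$ and generic linear forms $\ell\circ f$ is unnecessary; you may simply take $n=1$ throughout. Likewise, the trivial cases $f$ constant or $k=0$ (where $\mathcal{B}(f)=0$) should be disposed of first, since Corollary~\ref{injekcja} requires $f$ non-constant.

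The genuine gap is irreducibility. Corollary~\ref{injekcja} is stated for \emph{irreducible} $A$, and your argument silently assumes you can invoke it on all of $A$. If $A$ has several irreducible components, $f$ may well be constant on some of them (while non-constant on others), or take equal values at points lying over the same $x$ but on different components; in either case generic injectivity on fibres of $\pi|_A$ fails and the covering number $d$ of $p|_\Gamma$ drops below $\mathrm{deg} A$. The paper handles this by first proving the inequality on each irreducible component $S$ separately, obtaining $\mathcal{B}(f|_S)\le\mathrm{deg}\Gamma_{f|_S}-\mathrm{deg} S+1$, and then assembling via
\[
\mathcal{B}(f)=\max_S\mathcal{B}(f|_S),\qquad \mathrm{deg}\Gamma_f=\sum_S\mathrm{deg}\Gamma_{f|_S},\qquad \mathrm{deg} A=\sum_S\mathrm{deg} S,
\]
together with the elementary observation that $\max_S(\mathrm{deg}\Gamma_{f|_S}-\mathrm{deg} S)\le\sum_S(\mathrm{deg}\Gamma_{f|_S}-\mathrm{deg} S)$, since each summand is non-negative (the graph of a function projects properly onto its domain, so $\mathrm{deg}\Gamma_{f|_S}\ge\mathrm{deg} S$). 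You should add this reduction; otherwise the argument as written covers only the irreducible case.

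For the inequality $\mathrm{deg}\Gamma\le\mathrm{deg}\Gamma_f$ your reasoning via a generically one-to-one linear projection is fine, though note that only the inequality (not equality) is needed, so you do not have to arrange properness at infinity for $\pi\times\mathrm{id}_{\C}$.
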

\begin{proof}
If $f$ is constant or $k=0$, then $\mathcal{B}(f)=0$ and the estimate holds. We may assume thus $f$ non-constant and $k>0$.

Suppose first that $A$ is irreducible. 

Let $\pi\colon {\C}^m\to{\C}^k$ be a projection realizing the degree $\deg A$ and let $\Gamma:=(\pi\times\mathrm{id}_{\C})(\Gamma_f)$. The latter is clearly an algebraic set due to Remmert-Chevalley Theorem. A straightforward application of the main result of \cite{TW1} gives now $\mathcal{B}(f)\leq\mathrm{deg} \Gamma-d+1$, where $d$ is the covering number of the branched covering $\zeta\colon {\C}^k\times{\C}\to{\C}^k$ on $\Gamma$. 

It is easy to see that $\deg\Gamma\leq\deg\Gamma_f$. Indeed, if $\ell\subset{\C}^{k+1}$ is an affine complex line such that $\deg\Gamma=\#(\ell\cap \Gamma)$, then the set $L:=\{z\in{\C}^m\mid (\pi\times\mathrm{id}_{\C})(z)\in \ell\}$ is an affine space of dimension $m+1-k$ intersecting $\Gamma_f$ in a zero-dimensional set. This follows from the properness of $\pi\times\mathrm{id}_{\C}$ on $\Gamma_f$. Therefore, we have $\#(L\cap\Gamma_f)\leq\deg\Gamma_f$ (cf. \cite{L} VII.11). Clearly  $\#(\ell\cap \Gamma)\leq\#(L\cap\Gamma_f)$.

The point is how to choose $\pi$ so as to have $d=\deg A$. 
We are able to do this thanks to Corollary \ref{injekcja} asserting that for the generic $\pi$ we have $\#f(\pi^{-1}(x)\cap A)=\deg A$ for the generic $x\in{\C}^k$.

Now, if $A$ is reducible, then we apply the preceding argument to each irreducible component $S\subset A$ and $f|_S$ getting 
$$
\mathcal{B}(f|_S)\leq\deg \Gamma_{f|_S}-\deg S+1.
$$ Observe that $\mathcal{B}(f)=\max\{\mathcal{B}(f|_S)\mid S\subset A\ \hbox{\it an irreducible component}\}$ (cf. \cite{S}) and $\deg\Gamma_f=\sum_S\deg\Gamma_{f|_S}$, since $\Gamma_{f|_S}$ is irreducible iff $S\subset A$ is irreducible (thus $\Gamma_{f|_S}$ are the irreducible components of the graph). Therefore 
$$
\mathcal{B}(f)\leq\max_{S\subset A}(\deg \Gamma_{f|_S}-\deg S)+1,
$$
but since for each irreducible component $S'$ there is $$\deg \Gamma_{f|_{S'}}-\deg S'\leq\sum_{S\subset A} (\deg \Gamma_{f|_S}-\deg S),$$ we finally obtain the required inequality.
\end{proof}
The following example shows that the estimate is far from being the best one. Nevertheless, it is of some interest in view of the proposition following the example.
\begin{ex}\label{przyklad}  Let $A$ be the algebraic curve 
$\{(x,y)\in {\C}^2\mid x^3=y^2\}$ and consider the c-holomorphic function
$$
f(x,y)=
\begin{cases}
\displaystyle
\frac{y}{x},\> \textit{\ for}\> (x,y)\in A\setminus (0,0)\\
0, \>\textit{\ for}\> x=y=0
\end{cases}
$$
It is easy to see (cf. Lemma \ref{2.1}) that $\Gamma_f$ is algebraic and $\mathcal{B}(f)=1/3$. Actually the algebricity of $\Gamma_f$ is not really 
surprising because $f$ is the restriction to $\mathrm{Reg} A$ of a rational function. We will 
show (next section) that in fact the algebricity of the graph is equivalent in this case to the fact that 
the function has a rational `extension'.

Let us observe that the restriction of a polynomial to $A$ may have a growth exponent  $<1$. Consider on the same set $A$ the function $g(x,y)=x$. A straighforward computation yields $\mathcal{B}(g)=2/3$ (see also Theorem \ref{krzywa}).

Finally, let us note that a c-algebraic function is to some,but rather limited, extent `represented' by the projection onto the target space when restricted to its graph. Such a restriction $\pi|_{\Gamma_f}$ is a regular function but unfortunately it does not encode all the information about $f$. We can see the reason in the simplest case of a polynomial: if we restrict the projection $\pi(x,y)=y$ to the graph of the function $p(x)=x^2$, we get $\mathcal{B}(\pi|_{\Gamma_p})=1<\mathcal{B}(p)=\deg p=2$ and there is no simple relation between the two exponents in general.
\end{ex}
\begin{prop}\label{rwn}
 Let $f\in\mathcal{O}^\mathrm{a}_c(A)$ with $A$ pure $k$-dimensional. Then the following conditions are equivalent:
\begin{enumerate}
 \item $\mathcal{B}(f)\leq 1$;
\item $\deg A=\deg\Gamma_f$;
\item $\Gamma_f^*\cap (\{0\}\times{\C})=\{0\}$;
\item $\{P=0\}^*\cap (\{0\}\times{\C})=\{0\}$,
\end{enumerate}
where $P$ is the minimal polynomial describing $(\pi\times\mathrm{id}_{\C})(\Gamma_f)$ with $\pi\colon {\C}^m\to {\C}^k$ a projection realizing $\deg A$. 
\end{prop}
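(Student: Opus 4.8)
The plan is to prove the cyclic chain of implications $(1)\Rightarrow(2)\Rightarrow(3)\Rightarrow(4)\Rightarrow(1)$, with the two middle steps being essentially a restatement of the geometry at infinity and the two outer steps carrying the analytic content. First I would establish $(1)\Rightarrow(2)$. Assume $\mathcal{B}(f)\le 1$, i.e. $|f(x)|\le C(1+|x|)$ on $A$. Choosing $\pi\colon{\C}^m\to{\C}^k$ realizing $\mathrm{deg} A$, we have by Corollary \ref{injekcja} (for the generic such $\pi$) that $\eta:=\pi\times\mathrm{id}_{\C}$ maps $\Gamma_f$ one-sheetedly onto $\Gamma:=\eta(\Gamma_f)$, so that $\mathrm{deg}\Gamma_f$ equals the covering number $d$ of $\zeta|_\Gamma$, which by that corollary equals $\mathrm{deg} A$; conversely $\mathrm{deg}\Gamma\le\mathrm{deg}\Gamma_f$ always by the properness argument already used in the proof of Theorem 3.6. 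The growth bound $|f|\le C(1+|x|)$ transfers to $\Gamma$, which combined with the inclusion $A\subset\{|v|\le M'(1+|u|)\}$ (in the coordinates realizing $\deg A$) gives, via Lemma \ref{Adam} applied to the minimal polynomial $P$ of $\Gamma$, that $\delta(P)=\mathcal{B}(f)\le 1$; but $\delta(P)\le 1$ forces $\deg a_j\le j$ for all $j$, hence $\deg P=d=\deg A$, and $\deg\Gamma_f=\deg\Gamma=\deg P$. (One checks here that $\mathcal{B}(f)=\delta(P)$ and that $\deg\Gamma=\deg P$ — both already used in the paper's derivation of Theorem \ref{AG2.3}.)

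Next, $(2)\Leftrightarrow(3)$ is the dictionary between the projective degree and the cone at infinity. Recall from the introduction that $\mathrm{deg}\Gamma_f=\mathrm{deg}_0 C_{\Gamma_f}$ and that $\Gamma_f^*=\{v\mid (0,v)\in C_{\Gamma_f}\}$. Since $A$ is $k$-dimensional, so is $\Gamma_f$, and the projection $q(x,w)=x$ of ${\C}^m\times{\C}$ is proper on $\Gamma_f$ precisely when $\Gamma_f^*\cap(\{0\}\times{\C})=\{0\}$; in that case the degree of $\Gamma_f$ is computed by a generic fibre of $q$ over $A$, i.e. $\mathrm{deg}\Gamma_f=\#(q^{-1}(x)\cap\Gamma_f)=\#f^{-1}(x)$... more precisely $=\deg A$ when $f$ is injective on the generic fibre of the chosen $\pi$. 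Conversely, if $\Gamma_f^*\cap(\{0\}\times{\C})\neq\{0\}$ then there is a vertical line in the cone at infinity, which strictly increases the degree beyond $\deg A$; this gives $(2)\Leftrightarrow(3)$ after unwinding that $\deg\Gamma_f\ge\deg A$ always (project $\Gamma_f$ down to $A$) with equality iff no such vertical direction occurs. Then $(3)\Leftrightarrow(4)$ follows because $\eta=\pi\times\mathrm{id}_{\C}$ is proper and one-sheeted on $\Gamma_f$ (again Corollary \ref{injekcja}), so $C_{\Gamma_f}$ and $C_{\Gamma}$ correspond under the induced linear map $\pi\times\mathrm{id}_{\C}$, whose kernel lies in ${\C}^k\times\{0\}$ and hence meets $\{0\}\times{\C}$ trivially; thus the vertical line $\{0\}\times{\C}$ in the $w$-variable is untouched, and $\Gamma_f^*\cap(\{0\}\times{\C})=\{0\}$ iff $\{P=0\}^*\cap(\{0\}\times{\C})=\{0\}$, using $\{P=0\}^*=\{P^+=0\}$.

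Finally $(4)\Rightarrow(1)$: if $\{P^+=0\}\cap(\{0\}\times{\C})=\{0\}$, then $P^+(x,t)$, as a polynomial in $t$, has nonvanishing leading coefficient, i.e. $P^+$ contains the monomial $t^{\deg P}$; this means the coefficient $a_j(x)$ of $t^{d-j}$ in $P$ has $\deg a_j\le j$ for every $j$, so $\delta(P)\le 1$, and therefore $\mathcal{B}(f)=\delta(P)\le 1$ by Lemma \ref{Adam} (together with the reduction, used throughout the section, that projecting along $\pi$ realizing $\deg A$ does not change $\mathcal{B}(f)$ and identifies it with $\delta$ of the minimal polynomial of $\Gamma$). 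The step I expect to be the main obstacle is the careful bookkeeping in $(3)\Leftrightarrow(4)$: one must be sure that the inequality $\deg\Gamma\le\deg\Gamma_f$ is an equality under hypothesis $(3)$ (so that transferring the cone condition through $\eta$ is lossless), which ultimately rests on the one-sheetedness furnished by Corollary \ref{injekcja} for the generic $\pi$ realizing $\deg A$ — and on checking that ``generic'' choices can be made simultaneously compatible with all the conditions in play. The remaining computations (the relation $\mathcal{B}(f)=\delta(P)$, the equality $\deg\Gamma=\deg P$, the description $\{P=0\}^*=\{P^+=0\}$) are routine and were already invoked earlier in the paper.
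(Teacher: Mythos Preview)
Your cyclic scheme $(1)\Rightarrow(2)\Rightarrow(3)\Rightarrow(4)\Rightarrow(1)$ differs from the paper's, which proves $(1)\Leftrightarrow(2)$, then $(1)\Leftrightarrow(4)$, then $(3)\Leftrightarrow(4)$. More importantly, your argument for $(1)\Rightarrow(2)$ has a genuine gap.

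You claim that because $\eta=\pi\times\mathrm{id}_{\C}$ is one-sheeted on $\Gamma_f$, ``$\mathrm{deg}\,\Gamma_f$ equals the covering number $d$ of $\zeta|_\Gamma$''. This is not true and Corollary~\ref{injekcja} does not say it: projective degree is \emph{not} preserved by one-sheeted (even biholomorphic) linear projections. For instance, the twisted cubic $\{(t,t^2,t^3)\}\subset\C^3$ has degree $3$, while its biholomorphic projection to $\{(t,t^2)\}\subset\C^2$ has degree $2$. Your subsequent computation correctly gives $\mathrm{deg}\,\Gamma=\mathrm{deg}\,P=d=\mathrm{deg}\,A$, but together with $\mathrm{deg}\,\Gamma\le\mathrm{deg}\,\Gamma_f$ this only yields $\mathrm{deg}\,A\le\mathrm{deg}\,\Gamma_f$, the trivial direction. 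The reverse inequality is the whole content of $(1)\Rightarrow(2)$ and is missing. Note also that Corollary~\ref{injekcja} assumes $A$ irreducible and $f$ non-constant, neither of which is hypothesised here; the paper's proof avoids that corollary entirely.

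The paper's argument for $(1)\Rightarrow(2)$ is shorter and direct: from $|f(x,y)|\le C(1+|x|+|y|)$ and $A\subset\{|y|\le M(1+|x|)\}$ one gets
\[
\Gamma_f\subset\{(x,y,t)\mid |y|+|t|\le K(1+|x|)\},
\]
so the projection $(x,y,t)\mapsto x$ onto $\C^k$ realises $\mathrm{deg}\,\Gamma_f$; since this projection factors through the bijection $\Gamma_f\to A$ followed by $\pi|_A$, its multiplicity is $\mathrm{deg}\,A$. Your steps $(4)\Rightarrow(1)$ and the idea behind $(3)\Leftrightarrow(4)$ are essentially correct and close to the paper's treatment (the paper does $(3)\Leftrightarrow(4)$ by an explicit sequence argument rather than a kernel remark). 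Your $(2)\Rightarrow(3)$ sketch is vague; the paper avoids that implication altogether, getting $(3)$ via $(1)\Leftrightarrow(4)\Rightarrow(3)$.
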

\begin{proof}
%
The implication from $(2)$ to $(1)$ follows from the preceding theorem. To prove the converse, suppose that coordinates in ${\C}^m={\C}^k_x\times{\C}_y^{m-k}$ are chosen in such a way that the projection onto the first $k$ coordinates realizes $\deg A$. Then $A\subset\{|y|\leq M(1+|x|)\}$. On the other hand, since $\mathcal{B}(f)\leq 1$, there is $|f(x,y)|\leq C(1+|x|+|y|)$ for $(x,y)\in A$. Combining the two facts, we obtain
$$
\Gamma\subset\{(x,y,t)\in{\C}^k\times{\C}^{m-k}\times{\C}\mid |y|+|t|\leq [M+C(1+M)](1+|x|)\}
$$
which implies that the projection $(x,y,t)\mapsto x$ realizes $\deg\Gamma_f$. Clearly, by the univalence of the graph, the multiplicity of this projection is equal to $\deg A$. 

Now, we turn to proving the equivalence of the remaining conditions. 
Choose affine coordinates so as to have ${\C}^m_z={\C}^k_x\times{\C}^{m-k}_y$ and $\pi(z)=x$. Write $P(x,t)=t^d+a_1(x)t^{d-1}+\ldots+a_d(x)$ and observe that $\mathrm{deg} P=\max\{d,\mathrm{deg} a_1+d-1,\dots, \mathrm{deg} a_d\}$. Therefore, in view of the fact that $\mathcal{B}(f)=\delta(P)$, $(1)$ is equivalent to $\mathrm{deg} P=d$. This in turn is equivalent to $(4)$, since $\{P=0\}^*=\{P^+=0\}$. 

Now, clearly $\Gamma_f^*\supset \{0\}\times{\C}$ implies $\{P=0\}^*\supset\{0\}\times{\C}$ and thus $(4)$ implies $(3)$. On the other hand, by the choice of $\pi$, 
$A\subset\{|y|\leq c(1+|x|)\}$ and so for all $(x,y)\in A$ large enough, $|y|\leq c'|x|$. Take a point $(0,t)\in\{P=0\}^*$. There are sequences $\{P=0\}\ni (x_\nu, t_\nu)\to\infty$ and $\lambda_\nu\in{\C}$ such that $\lambda_\nu(x_\nu, t_\nu)\to (0,t)$. Of course, there is a sequence $y_\nu\in{\C}^{m-k}$ such that $(x_\nu,y_\nu)\in A$ and $t_\nu=f(x_\nu,y_\nu)$. For all $\nu$ large enough, $|\lambda_\nu y_\nu|\leq c'|\lambda_\nu x_\nu|\to 0$, whence $(0,t)\in \Gamma_f^*$. Therefore, $(3)$ implies $(4)$.
\end{proof}

\section{Algebraic Graph Theorem}

Using Oka's theorem about universal denominators (cf. \cite{TsY} and \cite{Wh}) one can show that any c-holomorphic 
function admits locally a universal denominator. We will detail this a little more  
in the proof of the following theorem. For the convienience of the reader let us start with one useful construction of a universal denominator.
\begin{prop}\label{3.3} Let $A\subset U\times{\C}_t\times{\C}_y^{m-k}$ be a pure $k$-dimensional analytic set, where $U\subset{\C}_x^{k}$ is open and connected, such that $0\in A$ and the natural projection $\pi(x,t,y)=x$ is proper on $A$ with covering number $d$. Then after a change of coordinates in ${\C}\times{\C}^{m-k}$ there exists a monic polynomial $P\in\mathcal{O}(U)[t]$ of degree $d$ such that $Q(x,t,y):=\frac{\partial P}{\partial t}(x,t)$ is a universal denominator at each point $a\in A$. 
\end{prop}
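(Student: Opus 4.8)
The plan is to exploit the branched-covering structure of $\pi|_A$ together with Oka's universal-denominator theorem, and then to carefully arrange coordinates so that a \emph{single} monic polynomial in $t$ alone (with the $y$-variables not appearing) does the job simultaneously along the full branched covering. First I would apply the Weierstrass–type preparation available because $\pi|_A$ is a $d$-sheeted branched covering: writing $A = \bigcup \Gamma_j$ locally over $U\setminus\sigma_\pi$ as graphs of the branches, the $t$-coordinate takes $d$ values $t_1(x),\dots,t_d(x)$ on the fibre $\pi^{-1}(x)\cap A$ (counted with multiplicity), and the elementary symmetric functions of these are holomorphic on $U\setminus\sigma_\pi$ and locally bounded near $\sigma_\pi$, hence extend holomorphically to all of $U$ by the Riemann extension theorem (recall $\sigma_\pi$ is a proper analytic subset, so $U\setminus\sigma_\pi$ is connected and $\mathrm{codim}\,\sigma_\pi\geq 1$). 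This produces a monic $P(x,t) = t^d + a_1(x)t^{d-1}+\cdots+a_d(x) \in \mathcal O(U)[t]$ of degree exactly $d$ with $P(x,t)=0$ on $A$. The key point to check here is that the $t$-coordinate, after a \emph{generic linear change of coordinates} in ${\C}_t\times{\C}_y^{m-k}$, separates the $d$ points of the generic fibre — this is exactly Lemma \ref{lemmacik} applied fibrewise, and a standard parameter-dependent argument (or an appeal to the fact that the discriminant of $P$ is a nonzero element of $\mathcal O(U)$ for the generic choice) shows the resulting covering $A \to U$ in the $(x,t)$-variables is again $d$-sheeted and generically unramified.

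Next I would bring in Oka's theorem: for each $a\in A$ there is a local universal denominator, i.e. a germ $q_a$ holomorphic near $\pi(a)$, not a zero divisor, such that $q_a\cdot \mathcal O^w_{A,a} \subset \mathcal O_{A,a}$ (weakly holomorphic functions become honest holomorphic after multiplication by $q_a$). The classical construction (cf.\ \cite{TsY}, \cite{Wh}) of such a denominator, when $A$ is presented as a branched cover of $U$ with the fibre coordinate being $t$ and $P$ the associated monic polynomial, is precisely $Q = \partial P/\partial t$: this is the content of the fact that $\mathcal O^w_{A,a}$ is contained in $\frac{1}{\partial P/\partial t}\,\mathcal O(U)[t]/(P)$, which follows from the Lagrange interpolation formula writing any weakly holomorphic $g$ as $\sum_j g(t_j)\prod_{i\neq j}\frac{t-t_i}{t_j-t_i}$ and clearing the denominator $\prod_{i\neq j}(t_j-t_i)$, a divisor of the discriminant, against $\partial P/\partial t(x,t_j) = \prod_{i\neq j}(t_j - t_i)$. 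One must also check $Q$ is not a zero divisor on $A$ near $a$: this holds because $\{\partial P/\partial t = 0\} \cap A$ projects into the discriminant locus $\sigma_\pi$, which is nowhere dense in $A$ (as $A$ is pure-dimensional and $\pi|_A$ is generically unramified after our coordinate choice), so $Q$ vanishes on no irreducible component of the germ $(A,a)$.

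The main obstacle — and the reason the statement is phrased with "at each point $a\in A$" rather than just "at $0$" — is \emph{uniformity}: one needs the \emph{same} polynomial $P$, built once and for all over $U$, to yield a universal denominator at every point of $A$, including the ramification points lying over $\sigma_\pi$ and including points where several sheets come together. Here I would argue that the Lagrange/interpolation computation above is purely formal over the fraction field and localizes correctly: for $a$ with $\pi(a)=x_0$, the germ of $A$ at $a$ corresponds to a subset of the $d$ sheets, and the relevant local monic factor of $P$ at $a$ divides $P$, so $\partial P/\partial t$ at $a$ is divisible by (hence dominates) the local denominator coming from that factor — the extra factors of $P$ contribute units times holomorphic functions and do no harm. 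The delicate bookkeeping is to make sure that when $a$ is a singular point of $A$ itself (not merely a ramification point of $\pi$), the weakly holomorphic functions at $a$ are still captured, which is where one genuinely uses that $P$ is the \emph{minimal} such monic polynomial along each branch and invokes Oka's theorem in the form that the integral closure of $\mathcal O(U)[t]/(P)$ in its total ring of fractions is contained in $\frac{1}{\partial P/\partial t}\mathcal O(U)[t]/(P)$. Assembling these local statements, with $P$ fixed, gives the global conclusion.
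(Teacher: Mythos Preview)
Your proposal is correct and follows essentially the same route as the paper: choose coordinates so that the $t$-variable separates the $d$ points of the generic fibre (the paper does this by a single rotation in ${\C}\times{\C}^{m-k}$, you via Lemma~\ref{lemmacik}), take $P$ to be the monic polynomial of degree $d$ cutting out the hypersurface $\rho(A)\subset U\times{\C}_t$, and then verify that $Q=\partial P/\partial t$ clears denominators via the Lagrange interpolation formula together with Riemann extension across the discriminant. The paper's argument is simply the concrete version of your abstract one: it writes down the interpolant $h(x,t)=\sum_j f(x,t_j(x),y^j(x))\prod_{\iota\neq j}(t-t_\iota(x))$ directly, observes $h=Qf$ on the sheets, and extends $h$ holomorphically across $\sigma_\pi$ by boundedness---your integral-closure and Oka framing is the same computation viewed from above, and your extra discussion of uniformity at every $a\in A$ is handled in the paper implicitly by the fact that the extension $h$ is produced globally on $U\times{\C}$.
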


\begin{proof} Let $\rho(x,t,y)=(x,t)$ and $\xi(x,t)=x$ be the natural projections. For any point $x\in U$ not critical for $\pi|_A$ we have exactly $d$ distinct points $(t_1,y^1),\dots, (t_d, y^d)$ over it in $A$. If we fix $x$, then taking if necessary a rotation in ${\C}\times{\C}^{m-k}$, we may assume that all the points $t_1, \dots, t_d$ are distinct. Thus $\xi$ on $\rho(A)$ has multiplicity $d$ as a branched covering. Note that by the Remmert theorem $\rho(A)\subset U\times{\C}$ is an analytic hypersurface. Thus there exist a reduced Weierstrass polynomial $P\in\mathcal{O}(U)[t]$ such that $P^{-1}(0)=\rho(A)$. Its degree is obviously $d$. 

Now for fixed $x$ in a simply connected neighbourhood $V$ not intersecting the critical set of $P$ we have $(t_1(x),y^1(x)),\dots, (t_d(x),y^d(x))$, exactly $d$ distinct points. Given a weakly holomorphic function (\footnote{I.e. a function defined on the regular part $\mathrm{Reg} A$, holomorphic there and locally bounded near the singularities $\mathrm{Sng}A$. Then normality is just the property that each such a function is locally the restriction of a holomorphic function in the ambient space.}) $f\colon \mathrm{Reg} A\to {\C}$ put
$$
h(x,t):=\sum_{j=1}^d f(x,t_j(x),y^j(x))\prod_{\iota\neq j} (t-t_\iota(x)),\quad (x,t)\in V\times {\C}.
$$
Observe that $h(x,t_j(x))=f(x,t_j(x),y^j(x))Q(x,t_j(x))$. Clearly the function $h(x,t)$ is locally holomorphic apart from the critical set of $f$ (because the functions $t_j(x)$ and $y^j(x)$ are locally holomorphic) and locally bounded near the critical points of $P$, and so by the Riemann theorem we obtain a holomorphic function $h\in\mathcal{O}(U\times {\C}\times{\C}^{m-k})$ being an extension of $Qf$. \end{proof}

\begin{thm}\label{3.4} Let $A\subset{\C}^m$ be a purely $k$-dimensional algebraic 
set and let $f\in{\ch}(A)$. Then $\Gamma_f$ is algebraic if and only if there exists a rational 
function $R\in{\C}(x_1,\dots,x_m)$ equal to $f$ on $A$ (in particular $R|_A$ is continuous). More precisely, there exists a polynomial $Q\in{\C}[x_1,\dots,x_m]$ with $\mathrm{deg} Q<\mathrm{deg} A$ such that $f=P/Q$ on $A$ for some polynomial $P\in{\C}[x_1,\dots, x_m]$.\end{thm}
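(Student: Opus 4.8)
The plan is to prove the two implications separately, with the non-trivial direction being the passage from algebraicity of $\Gamma_f$ to the existence of a rational extension. The converse is essentially immediate: if $f = R|_A$ for some $R = P/Q \in \C(x_1,\dots,x_m)$ that restricts to a continuous function on $A$, then $\Gamma_f$ is contained in the algebraic set $\{(x,t) \in A \times \C \mid tQ(x) = P(x)\}$ cut out by polynomials, and since $f$ is c-holomorphic (hence its graph is of pure dimension $k$ and closed in $\C^m \times \C$) one checks $\Gamma_f$ is an irreducible-component-wise union of such pieces, hence algebraic; alternatively, invoke Lemma \ref{2.1}, since a rational function continuous on $A$ satisfies a polynomial growth bound on $A$ (the denominator $Q$, being nowhere zero on $A$ where $R$ is finite and $R$ being continuous up to the zero set of $Q$ on $A$, forces the bound by a \L ojasiewicz-type inequality at the relevant points, or more simply because $f$ extends continuously and $\{Q=0\}\cap A$ is then contained in $\{P=0\}$).

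For the main direction, assume $\Gamma_f$ is algebraic; then $A$ is algebraic by Chevalley-Remmert. The strategy is to first reduce to the case where $A$ is irreducible: if $A = \bigcup_j S_j$ with $S_j$ the irreducible components, and we have $f|_{S_j} = P_j/Q_j$ with $\deg Q_j < \deg S_j$, we need to glue these into a single rational function agreeing with $f$ on all of $A$, which is a standard interpolation argument (clear denominators against the ideals of the components and combine) — though care is needed to keep the degree of the global denominator below $\mathrm{deg}\,A$, and this bookkeeping is one place I expect friction. So suppose $A$ is irreducible of degree $d = \mathrm{deg}\,A$. Choose, by Corollary \ref{injekcja}, coordinates $\C^m = \C^k_x \times \C^{m-k}_y$ so that $\pi(x,y) = x$ realizes $\mathrm{deg}\,A = d$ and $f$ is injective on the generic fibre $\pi^{-1}(x) \cap A$. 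The idea now is the elementary-symmetric-function / universal-denominator construction: over a generic $x$, the fibre $\pi^{-1}(x)\cap A$ consists of $d$ points $w_1(x),\dots,w_d(x) \in \C \times \C^{m-k}$ (with $w_j = (t_j, y^j)$), and $\rho(A) \subset U \times \C$ — where $\rho(x,t,y) = (x,t)$ — is, after a generic rotation in $\C\times\C^{m-k}$ separating the $t$-coordinates (exactly as in Proposition \ref{3.3}), a hypersurface $\{P = 0\}$ for a reduced Weierstrass polynomial $P(x,t) = t^d + a_1(x)t^{d-1} + \dots + a_d(x)$ with $a_j \in \mathcal{O}(U)$. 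Because $\Gamma_f$ is algebraic and $\pi \times \mathrm{id}$ is proper on it, $\rho(A)$ is in fact algebraic, so the $a_j$ are polynomials and $P \in \C[x,t]$.

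Now apply Proposition \ref{3.3}: with $Q(x,t) := \partial P / \partial t(x,t)$, the function $Qf$ extends to a holomorphic $h$ on $U \times \C \times \C^{m-k}$, given near a generic point by $h(x,t) = \sum_{j=1}^d f(x,w_j(x)) \prod_{\iota \neq j}(t - t_\iota(x))$. The key new observation — this is where the injectivity of $f$ on generic fibres from Corollary \ref{injekcja} enters — is that, since the $t_j(x)$ are the (distinct, generically) roots of $P(x,\cdot)$, and since $f(x,w_j(x))$ depends only on the point $(x, t_j(x))$ of $\rho(A)$ (here one uses that the graph is univalent: $f$ is a function of its argument, and after the rotation the fibre point is determined by $(x,t_j)$ — more precisely $w_j(x)$ is a c-holomorphic function of $(x,t_j(x))$ on $\rho(A)$, hence so is $f(x,w_j(x))$), $h(x,t)$ is a polynomial-in-$t$ whose coefficients are holomorphic symmetric functions of the roots, hence polynomials in $x$ via Newton's identities and the fact that the $a_j$ are polynomials. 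Therefore $h \in \C[x,t]$, and restricting to the graph: for $(x,y) \in \mathrm{Reg}\,A$ with $t = $ the relevant root, $f(x,y) = h(x,t)/Q(x,t)$ where $t$ itself is... — and here I substitute back: actually since on $\rho(A)$ we can write $t$ as a c-holomorphic (indeed, by the algebraic graph theorem applied to the curve-case or by direct elimination, rational) function, we get $f = h(x,t(x,y))/Q(x,t(x,y)) =: P_0(x,y)/Q_0(x,y)$ as a rational function on $A$. Finally, the degree estimate $\deg Q < \mathrm{deg}\,A = d$ should follow because $Q = \partial P/\partial t$ has $t$-degree $d-1$ and the total-degree bookkeeping is controlled by $\mathrm{deg}\,P$ relative to $d$; I would verify $\mathrm{deg}\,Q \le \mathrm{deg}\,\Gamma_f - 1 \le \ldots < d$ using the relations between $\mathrm{deg}\,A$, $\mathrm{deg}\,\rho(A)$ and $\mathrm{deg}\,P$ from the previous section. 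I expect the main obstacle to be precisely this last point — showing the denominator can be taken of degree strictly less than $\mathrm{deg}\,A$, rather than merely exhibiting \emph{some} rational extension — together with the careful reduction to the irreducible case while preserving that degree bound; the existence of a rational extension itself is, given Proposition \ref{3.3} and Corollary \ref{injekcja}, fairly mechanical.
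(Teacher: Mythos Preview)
The central gap is your step ``$h\in\C[x,t]$ via Newton's identities''. The coefficients of $h(x,t)=\sum_j f(x,w_j(x))\prod_{\iota\neq j}(t-t_\iota(x))$ in powers of $t$ are \emph{not} symmetric polynomial functions of the roots $t_j(x)$ alone: each involves the values $f(x,w_j(x))$, and Newton's identities say nothing about those. Symmetry in the branch index $j$ only yields single-valued holomorphic functions of $x$ (via Riemann extension across the discriminant), not polynomials. The paper fills this differently, and this is the missing idea: $Qf$ is c-holomorphic on $A$ with polynomial growth (Lemma~\ref{2.1} applied to $f$, together with $Q$ being a polynomial), so its graph over $A$ is algebraic; then Serre's algebraic graph theorem for regular functions gives that $Qf$ is the restriction to $A$ of a polynomial $P\in\C[x_1,\dots,x_m]$. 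An equivalent patch would be to bound the $t$-coefficients of $h$ by polynomial growth in $x$ and invoke Liouville --- but that is precisely Lemma~\ref{2.1} plus Serre, not a Newton-identity computation. Relatedly, the reduction to irreducible $A$ and the appeal to Corollary~\ref{injekcja} are unnecessary detours: Proposition~\ref{3.3} already works for any pure $k$-dimensional $A$, and fibrewise injectivity of $f$ plays no role in Theorem~\ref{3.4}. Your closing ``substitute $t$ back as a rational function of $(x,y)$'' is also confused: in the coordinates $z=(x,t,y)$ of Proposition~\ref{3.3}, $t$ \emph{is} one of the $m$ ambient variables, so once $h$ and $Q$ are polynomials in $(x,t)$ they are polynomials on $\C^m$ and $f=h/Q$ on $A$ directly.

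On the positive side, the denominator degree bound you worry about at the end is automatic: since $\pi$ realises $\deg A=d$, one has $A\subset\{|y|\le C(1+|x|)\}$, hence $|t_j(x)|\le C'(1+|x|)$ on $\rho(A)$, so $\deg a_j\le j$; thus $\deg P=d$ and $\deg Q=\deg(\partial P/\partial t)\le d-1<\deg A$.
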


\begin{proof} If $m=1$, then either $A$ is the whole ${\C}$, and then by the identity principle 
$\Gamma_f$ is algebraic if and only if $f$ is a polynomial (cf. Serre's theorem on the algebraic graph), 
or $A$ is a finite set and we apply a Lagrange interpolation. In both cases $Q\equiv 1$. Hence we may confine us to the case 
$m\geq 2$.

If $k=0$, then $\#A<\infty$. 
We follow Lemma \ref{lemmacik}. The set $$\{\ell\in G_1({\C}^m)\mid \exists x,y\in A,\ x\neq y\colon y\in x+\ell\}$$ is finite (even algebraic). Take thus a line $\ell\in G_1({\C}^m)$ such that for all $x\in A$, $(x+\ell)\cap A=\{x\}$. If we denote by $\pi^\ell$ the natural projection along $\ell$ onto its orthogonal complement $\ell^\bot$, then $\#\pi^\ell(A)=\# A$. Continuing this procedure we find a one-dimensional subspace $L\subset{\C}^m$ such that $\#\pi(A)=\#A$, where $\pi$ is the orthogonal projection onto $L$. Now the Lagrange interpolation for $\pi(A)$ and the values $f(a), \pi(a)\in\pi(A)$ yields a polynomial $P\in{\C}[t]$. Then $\widetilde{P}(x):=P(\pi(x))$ is the polynomial interpolating $f$ on $A$. The `only if' part is clear.

\smallskip
Assume now that $k\geq 1$. 
Since $A$ is 
algebraic of pure dimension $k$, there are coordinates in ${\C}^m$ such that the projection 
$\pi$ onto the first $k$ coordinates is proper on $A$ (so it is a branched covering) and it realizes $\mathrm{deg} A$. 
%
Now if we take $\rho(x_1,\dots, x_m)=(x_1,\dots, x_{k+1})$, then we are able to apply proposition \ref{3.3} getting a polynomial (since by Chevalley's theorem $\rho(A)$ is an algebraic hypersurface) $Q$ being a local universal denominator for $A$. Since ${\C}^m$ is a domain of holomorphy, $Q$ is in fact a global universal denominator for $A$ (actually this follows directly from the proof of \ref{3.3}).

That means that there exists $h\in\mathcal{O}({\C}^m)$ such that 
$$
f=\frac{h}{Q}\quad \hbox{on}\ \mathrm{Reg} A.
$$
Note that for points $a\in\mathrm{Sng} A$, if we take any sequence $\mathrm{Reg} A\ni a_\nu\to a$, then by continuity we obtain $f(a)Q(a)=h(a)$. Therefore either $a$ is a point in which $h/Q$ is well defined, or it is a point of indeterminacy of $h/Q$. In the latter case, the function $h/Q$ has a finite and well defined limit along $A$, namely $f(a)$. Thus $h/Q$ is continous on $A$.  

\smallskip
As a matter of fact $h$ is not uniquely determined. The proof of our theorem consists now in showing 
\begin{enumerate}
\item[(a)] If $h$ is a polynomial, then $\Gamma_f$ is algebraic;

\item[(b)] If $\Gamma_f$ is algebraic, then we may choose $h\in{\C}[x_1,\dots,x_m]$.
\end{enumerate}
\noindent Ad (a): 
%
Let $X:=(A\times{\C})\cap\{(x,t)\in{\C}^m\times{\C}\mid h(x)=Q(x)t\}$. It is an algebraic set of dimension at least $k$. Over points $x\in A\setminus Q^{-1}(0)$ this is exactly the graph of $f$. Thus for each such point $x$ and the only one  $t$ for which $(x,t)\in X$, we have $\mathrm{dim}_{(x,t)} X=k$. On the other hand, since $Q$ does not vanish on any irreducible component of $A$, the set $A\cap Q^{-1}(0)$ has pure dimension $k-1$ (see \cite{D2}). For each point $x\in A\cap Q^{-1}(0)$ we have a whole line $\{x\}\times{\C}\subset X$. Thus the set $X$ has pure dimension $k$. 

Set $\Gamma:=\Gamma_f\setminus(Q^{-1}(0)\times{\C})=\Gamma_f\setminus [(A\cap Q^{-1}(0))\times{\C}]$. Then we have $\Gamma\subset X$ and so for closures $\overline{\Gamma}\subset\overline{X}=X$. But by continuity $\overline{\Gamma}=\Gamma_f$, and since $\Gamma_f$ has pure dimension $k$ it must be the union of some irreducible components of $X$. Since $X$ is algebraic, so is $\Gamma_f$. 

\smallskip
\noindent Ad (b): This follows from Serre's algebraic graph theorem (for regular functions, see \cite{L}). Indeed, $fQ$ is a holomorphic function in ${\C}^m$ with algebraic graph over the algebraic set $A$ (to see this apply lemma \ref{2.1}; one can remark by the way that $\mathcal{B}(fQ)\leq\mathcal{B}(f)+\mathcal{B}(Q|_A)$). Thus it is on $A$ a regular function which means that it is in fact the restriction to $A$ of a polynomial $P$. \end{proof}

\begin{rem}
It is easy to check that in the theorem above we obtain $$\mathcal{B}(f)\geq\mathcal{B}(P|_A)-\mathcal{B}(Q|_A).$$ 
\end{rem}



\section{Generically finite c-holomorphic mappings with algebraic graphs}\label{proper}
C-holomorphic functions with algebraic graphs are a promising generalization of polynomials onto algebraic sets. Most of the theorems known for instance for polynomial dominating mappings should have their analogues at least for c-holomorphic proper mappings with algebraic graphs. Note, however, that in this setting we are naturally obliged to make do more with the geometric structure than the algebraic one (that is a hindrance when trying to extend the results of \cite{D2} to the c-holomorphic algebraic case).

We consider now the following situation:\\ Let $A\subset{\C}^m$ be algebraic of pure dimension $k>0$ and $f\in\mathcal{O}_c^\mathrm{a}(A,{\C}^k)$. Suppose first $f$ is a proper mapping. 

Since $\Gamma_f$ is algebraic with proper projection onto ${\C}^k$, then $\# f^{-1}(w)$ is constant for the generic $w\in{\C}^k$. We call this number, denoted by $\mathrm{d}(f)$, the {\it geometric degree} of $f$ just as in the polynomial case. We call {\it critical} for $f$ any point $w\in{\C}^k$ for which $\#f^{-1}(w)\neq\mathrm{d}(f)$. In that case one has actually $\#f^{-1}(w)<\mathrm{d}(f)$ (cf. e.g. \cite{Ch}, the projection onto ${\C}^k$ restricted to $\Gamma_f$ is a $\mathrm{d}(f)$-sheeted branched covering). Obviously $\mathrm{d}(f)\leq\mathrm{deg}\Gamma_f$ (cf. \cite{L}).

Of course, we could define this degree in a more general setting. Observe that the properness of a continuous mapping $f\colon A\to{\C}^n$ is equivalent to $\lim_{x\in A\colon |x|\to+\infty}|f(x)|=+\infty$. Of course, if the graph of $f$  is algebraic, then the properness of $f$ implies $n\geq k$. Now, following Z. Jelonek, we say that $f$ is {\it proper over} $y\in{\C}^n$, if $y$ admits a compact neighbourhood $K$ such that $f^{-1}(K)$ is compact. Assume $f$ is c-algebraic. If we denote by $J_f$  the set of points over which $f$ is not proper (the {\it Jelonek set}, see e.g. \cite{Jel1} or \cite{Jel2}), then it coincides with $\rho(\overline{\Gamma_f}\setminus\Gamma_f)$ where the closure is taken in $\mathbb{P}_m\times{\C}^m$ (here $\mathbb{P}_m=\overline{{\C}^m}$) and $\rho\colon\mathbb{P}_m\times{\C}^n\to{\C}^n$ is the natural projection. Therefore, by the Chevalley-Remmert Theorem it is a constructible set, hence an algebraic one due to its closedness. Moreover, $\dim J_f<k=\dim\Gamma_f$. Now, if $n=k$ and $f$ is dominant, i.e. $\overline{f(A)}={\C}^k$, its fibres must be generically finite (see the next Lemma), since otherwise we would get $\dim A>k$.  Clearly, $f$ is a branched covering over the connected manifold ${\C}^k\setminus J_f$ and so the generic cardinality of the fibre $\mathrm{d}(f)$ is well-defined. Actually, it can also be easily defined that way for the case $n>k$, provided $A$ is irreducible, since $f$ is then a branched covering over the $k$-dimensional connected manifold $\mathrm{Reg}\overline{f(A)}\setminus J_f$. 

Another way of defining $\mathrm{d}(f)$ would be just to take directly, just as in \cite{Jel2}, the generic multiplicity of the regular mapping $\pi|_{\Gamma_f}$ where $\pi$ is the projection onto the target space. It is a classical fact that $J_f$ is characterized by the alternative: $\dim\pi^{-1}(y)\cap \Gamma_f>0$ or the fibre is finite but the sum of the local multiplicities is $<\mathrm{d}(f)$. It may be interesting to note the following easy Lemma.
\begin{lem}\label{S}
Let $X\subset{\C}_x^n\times{\C}_y^k$ be algebraic of pure dimension $k$ and $p(x,y)=y$ denotes the natural projection. Put $$S(p|_X):=\{y\in{\C}^k\mid \dim p^{-1}(0)\cap X=0\}.$$
Then the following asssertions are equivalent:\begin{enumerate}
\item $S(p|_X)\neq\varnothing$;
\item $\operatorname{int}S(p|_X)\neq\varnothing$;
\item $\overline{S(p|_X)}={\C}^k$.
\end{enumerate}
\end{lem}
\begin{proof}
(1) implies (2) since for any $y\in S(p|_X)$, there are two neighbourhoods $V\ni y$ and $U\supset p^{-1}(y)$ such that $(\partial U\times V)\cap X=\varnothing$ and so $p|_{(U\times V)\cap X}$ is proper which means by the Remmert Theorem that $V\subset S(p|_X)$. 

(2) implies (3), for $S(p|_X)$ is a constructible set of dimension $k$. Finally, (3) implies (1), obviously.
\end{proof}
Clearly, for a dominant $f\in{\oa}(A,{\C}^k)$, ${\C}^k\setminus S(p|_{\Gamma_f})\subset J_f$. 

Finally, observe that if $A=\bigcup_{j=1}^r A_j$ is the decomposition into irreducible components, then for each $A_j$, by the preceding Lemma, either $f|_{A_j}$  is dominant (which is true already when $f|_{A_j}$ has a finite fibre) and then $\mathrm{d}(f|_{A_j})$ is well-defined, or $\dim\overline{f(A_j)}<k$ in which case no fibre of $f|_{A_j}$ is finite (compare \cite{L} V.3.2 Theorem 2). Therefore, in the second case $\mathrm{d}(f|_{A_j})$ is not defined and we may consider $f(A_j)$ as {\it irrelevant}. Now, if $f|_{A_j}$ is dominant for $j=1,\dots, s$ and not dominant for $j=s+1,\dots, r$, then we may define $\mathrm{d}(f):=\sum_{j=1}^s\mathrm{d}(f|_{A_j})$ and we obviously have $\mathrm{d}(f)=\mathrm{d}(f|_{A_1\cup\ldots\cup A_s})$.

%


Before proving a B\'ezout inequality, let us note here also that a c-holomorphic proper mapping satisfies {\it the \L ojasiewicz inequality at infinity}:
\begin{thm}
Let $f\in{\oa}(A,{\C}^n)$ be proper with $A\subset{\C}^m$ pure $k$-dimensional, $k\geq 1$. Then there are constants $R, C,\ell>0$ such that 
$$
|f(x)|\geq C|x|^\ell,\quad |x|\geq R.
$$
\end{thm}
\begin{proof}
By assumptions, the natural projection $p(x,y)=y$ from ${\C}^m\times{\C}^n$ to ${\C}^n$ is proper on $\Gamma_f$. 
Necessarily, $n\geq k$ and $X:=f(A)$ is algebraic pure $k$-dimensional by the Chevalley Theorem. For the generic 
choice of coordinates in ${\C}_y^n={\C}_u^k\times{\C}_v^{n-k}$, the natural projection $\pi(u,v)=u$ is proper on $X$. Write $\eta(u,v)=v$ and assume that the norm on ${\C}^n$ is chosen in such a way that $|y|=|u|+|v|$. 

Now, $\pi\circ p$ is proper on $\Gamma_f$ and thus by \cite{TW1}, there are costants $c,q>0$ such that
$$
\Gamma_f\subset\{(x,u,v)\in{\C}^m\times{\C}^k\times{\C}^{n-k}\mid |x|+|v|\leq c(1+|u|)^q\}.
$$
From this we get for $x\in A$, and in view of the fact that $q>0$,
\begin{align*}
|x|&\leq |x|+|\eta(f(x))|\leq c(1+|\pi(f(x))|)^q\leq\\ &\leq c(1+|\pi(f(x))|+|\eta(f(x))|)^q=c(1+|f(x)|)^q
\end{align*}
which for some $R\geq 1$ and $c'>0$ is equivalent to 
$$
|x|\leq c'|f(x)|^q,\quad x\in A, |x|\geq R,
$$
by Remark \ref{rwz}.
\end{proof}
Since the inequality from the Theorem above is satisfied with any exponent $\ell'\leq\ell$, it is natural to introduce the {\it \L ojasiewicz exponent at infinity} defined as
$$
\mathcal{L}_\infty(f):=\sup\{\ell>0\mid |f(x)|\geq\mathrm{const.}|x|^\ell, x\in A,|x|\gg 1\}.
$$
A more detailed study of this exponent will be presented in \cite{BDT}. Here we give only two results, Theorems \ref{krzywa} and \ref{AB} below.

Similarly to the polynomial case, we have the following B\'ezout-type theorem (compare also Proposition 4.6 in \cite{D}):
\begin{thm}\label{Bezout}
Let $f\colon A\to{\C}^k$ be a c-holomorphic dominant mapping with algebraic graph. Then 
$$
\mathrm{d}(f)\leq\mathrm{deg} A\prod_{j=1}^k\mathcal{B}(f_j).
$$
Moreover, $\deg A$ can be replaced above by the degree of 
$$
A'=\bigcup\{S\subset A\mid S\>\textit{an irreducible component}\colon \overline{f(S)}={\C}^k\}.
$$
\end{thm}
\begin{proof}
Let $q_j$ be any positive integers such that $q_j\mathcal{B}(f_j)\in \mathbb{N}$ for $j=1,\dots,k$. Then set $F:=(f_1^{q_1},\dots,f_k^{q_k})$. We still have $F\in\mathcal{O}_c^\mathrm{a}(A)$ and $F$ is 
dominant with $\mathrm{d}(F)=\mathrm{d}(f)\prod_j q_j$. Besides, $\mathcal{B}(F_j)=q_j\mathcal{B}(f_j)$. 

The idea now is to follow the idea used in the proof of proposition (4.6) from \cite{D} inspired by the methods of P\l oski and Tworzewski. To that aim consider the algebraic set 
$$
\Gamma:=\{(z,w)\in A\times{\C}^k\mid w_j^{\mathcal{B}(F_j)}=F_j(z),\ j=1,\dots, k\}.
$$
Clearly, for any $a\in \Gamma$, there is $\mathrm{dim}_a\Gamma\geq k$ and since $\Gamma$ has proper projection $p(z,w)=z$ onto $A$, the converse inequality holds too and so $\Gamma$ is pure $k$-dimensional. 

Assume for the moment that $A$ is irreducible. 

Take now any affine subspace $\ell\subset{\C}^m$ of dimension $k$ such that $\#(\ell\cap A)=\mathrm{deg} A$ and 
$$
A\subset\{x+y\in\ell^\bot+\ell\mid |y|\leq C(1+|x|)\},
$$
where $\ell^\bot$ is an orthogonal complementary to $\ell$, $x+y=z$ and $C>0$ a constant. Then by construction $L:=\ell+{\C}^k$ (seen in ${\C}^{m+k}$) is transversal to $\Gamma$ and we have $\#(L\cap \Gamma)=\mathrm{deg} A\prod_j\mathcal{B}(F_j)$. Indeed, since $F$ is dominant, none of the $F_j$'s can be constant and thus, by the identity principle (see \cite{D2}) $F_j^{-1}(0)$ has pure dimension $k-1$. This means that for the generic $x\in\ell^\bot$ and any $z=x+y\in A$ the equation $w_j^{\mathcal{B}(F_j)}=F_j(z)$ has $\mathcal{B}(F_j)$ distinct solutions $w_j$.

We may assume that the norm in consideration is the sum of moduli. Now observe that for $(z,w)\in\Gamma$,
$$
|w_j|^{\mathcal{B}(F_j)}=|F_j(z)|\leq c_j|z|^{\mathcal{B}(F_j)}\ \hbox{when}\ |z|\geq R_j,
$$
for some $c_j,R_j>0$. Then $|w|\leq(\max_j c_j)|z|$ when $|z|\geq \max_j R_j$. Therefore, there exists a constant $K>0$ such that 
$$
\Gamma\subset\{(x,y,w)\in\ell^\bot+\ell+{\C}^k\mid |y|+|w|\leq K(1+|x|)\}
$$
and so $\mathrm{deg}\Gamma=\mathrm{deg} A\prod_j\mathcal{B}(F_j)$. 

Finally, it suffices to remark that one has $\mathrm{d}(F)\leq\mathrm{deg}\Gamma$ since there is  $\mathrm{d}(F)=\#(({\C}^m\times\{w\}^k)\cap\Gamma)$ for a well chosen $w$. 

Once we have the theorem for an irreducible $A$, in the general case we sum the obtained inequalities  for each irreducible component $A_\subset A$ on which $F$ is dominant (these are exactly the components on which $f$ is dominant):
\begin{align*}
\mathrm{d}(f|_{A_j})\prod_i q_i=\mathrm{d}(F|_{A_j})&\leq \deg A_j\prod_{i} \mathcal{B}(F_i|_{A_j})\leq \\
&\leq \deg A_j\prod_{i} \mathcal{B}(F_i)=\deg A_j\prod_{i} \mathcal{B}(f_i)q_i,
\end{align*}
since $\mathcal{B}(F_i)=\max_j\mathcal{B}(F_i|_{A_j})$ (these are natural numbers). Summing up,
$$
\sum_j\mathrm{d}(f|_{A_j})=\mathrm{d}(f)\leq \deg A'\prod_i\mathcal{B}(f_i)\leq \deg A\prod_i\mathcal{B}(f_i),
$$
as required.
\end{proof}

\begin{ex}
Let $A$ and $f$ be as in Example \ref{przyklad}. 
Since $f$ is injective, $\mathrm{d}(f)=1$. Clearly $\mathrm{deg} A=3$ and $\mathcal{B}(f)=1/3$. Thus $\mathrm{d}(f)=\mathrm{deg} A\cdot\mathcal{B}(f)$.
\end{ex}

This example hints at a more general observation concerning images of polynomial generic injections. Let us recall the following simple lemma.
\begin{lem}
Let $\gamma\colon {\C}\to {\C}^m$ be a polynomial generic injection i.e. a polynomial mapping for which there is a finite set $Z\subset\gamma({\C})$ such that $\gamma|_{{\C}\setminus \gamma^{-1}(Z)}\colon {\C}\setminus \gamma^{-1}(Z)\to \gamma({\C})\setminus Z$ is injective. Then $\gamma$ is proper and $\gamma({\C})$ is an irreducible curve of degree $\deg\gamma:=\max\deg\gamma_j$.
\end{lem}
\begin{proof}
The properness follows from the non-constancy and then the Remmert-Chevalley Theorem ensures that $\Gamma:=\gamma({\C})$ is an algebraic curve which obviously has to be irreducible. As for the degree, take an affine hyperplane $H\subset{\C}^m$ such that $H\cap \Gamma=H\cap \Gamma\setminus Z$ and $\#(H\cap \Gamma)=\deg \Gamma$. Then $H=L^{-1}(c)$ for some linear form $L$ and $c\in{\C}$. Then $L\circ\gamma$ is a polynomial of degree $\leq \deg\gamma$ and $\deg\Gamma=\#(L\circ\gamma)^{-1}(c)\leq\deg\gamma$. It remains to show that this inequality is not strict.

Let us assume that $d:=\deg\gamma=\deg\gamma_1=\ldots=\deg\gamma_n>\deg\gamma_{n+1}\geq\ldots\geq\deg\gamma_m$. Write $L(x)=\sum_{j=1}^ma_jx_j$ and $\gamma_j(t)=\sum_{i=1}^{d_j}c_{ji}t^i$ with $d_j=\deg\gamma_j$. The inequality obtained so far is strict only when $\sum_{j=1}^na_jc_{jd}=0$ i.e. $L(c_{1d},\dots, c_{nd},0,\dots,0)=0$. Let $\ell$ be the line spanned by the vector $(c_{1d},\dots, c_{nd},0,\dots,0)$. It is easy to check that $\ell=\Gamma^*$ and since by the choice of $H$, we have $H^*\cap \Gamma^*=\{0\}$ and $H^*=\Ker L$, then we are done.
\end{proof}
\begin{thm}\label{krzywa}
Let $\gamma\colon {\C}\to {\C}^m$ be a polynomial generic injection and $f\colon \Gamma\to{\C}$ a non-constant c-algebraic function on the algebraic curve $\Gamma:=\gamma({\C})$. 
Then
$$
\mathcal{L}_\infty(f)=\mathcal{B}(f)=\frac{\mathrm{d}(f)}{\mathrm{deg} \Gamma}=\frac{\deg(f\circ\gamma)}{\deg\gamma}.
$$
\end{thm}
\begin{proof}
The idea of the proof is similar to that of theorem (3.2) from \cite{D}. We may assume that $|\cdot|$ is the maximum norm. Let $d:=\mathrm{deg}\Gamma$ and observe that since byt the preceding Lemma,
$$
\lim_{|t|\to+\infty} \frac{|\gamma(t)|}{|t|^d}=\mathrm{const.}>0,
$$
we have $c|t|^d\leq |\gamma(t)|\leq C|t|^d$ for $|t|\gg 1$ and some constants $c,C>0$. Then 
the inequalities $c_1|x|^\ell\leq|f(x)|\leq c_2 |x|^{b}$ for $x\in \Gamma$ with $|x|\gg 1$ and exponents $\ell,b\geq 0$ and constants $c_1, c_2>0$ are equivalent to 
$$
c_1|\gamma(t)|^{d\ell}\leq|f(\gamma(t))|\leq c_2\cdot |\gamma(t)|^{db},\quad |t|\gg 1.
$$
Note that we clearly have $\ell\leq b$, i. e. $\mathcal{L}_\infty(f)\leq \mathcal{B}(f)$.

Observe now that $f\circ \gamma$ is a polynomial by Serre's Graph Theorem and so there are two positive constants $c'_1,c'_2$ such that
$$
c'_1 |t|^{\mathrm{deg} (f\circ\gamma)}\leq |f(\gamma(t))|\leq c'_2\cdot |t|^{\mathrm{deg} (f\circ\gamma)},\quad |t|\gg 1.\leqno{(\circ)}
$$
But ${\mathrm{deg} (f\circ\gamma)}=\mathrm{d}(f)$ because $\gamma$ being generically injective, we obviously have $\#(f\circ\gamma)^{-1}(w)=\#f^{-1}(w)$ for the generic $w\in{\C}$. Therefore, $\mathrm{d}(f)\leq db$ and so $\mathrm{d}(f)\leq \deg\Gamma\cdot\mathcal{B}(f)$. 

We have now on the one hand for $|t|\gg 1$,
$$
|f(\gamma(t))|\leq c_2'|t|^{\mathrm{d}(f)}\leq c_2'c|\gamma(t)|^{\mathrm{d}(f)/d}
$$
which implies $\mathcal{B}(f)\leq \mathrm{d}(f)/d$. Eventually, $\mathcal{B}(f)\cdot \deg\Gamma=\mathrm{d}(f)$. 

On the other hand, 
$$
|f(\gamma(t))|\geq c_1'|t|^{\mathrm{d}(f)}\geq c_1'c|\gamma(t)|^{\mathrm{d}(f)/d}
$$
implies readily $\mathcal{L}_\infty(f)\geq \mathrm{d}(f)/d$. But we already know that $\mathrm{d}(f)/d=\mathcal{B}(f)$ and thus $$\mathcal{L}_\infty(f)=\mathcal{B}(f)=\frac{\mathrm{d}(f)}{\deg \Gamma}.$$
\end{proof}
\begin{rem}
The statement of the Theorem above remains true for $f\in{\oa}(\Gamma,{\C}^n)$ with $n>1$ as can be easily seen from the proof. In that case $\mathrm{d}(f)$ is defined in a slightly different way. Namely, 
$$
\mathrm{d}(f):=\max_{j=1}^n\mathrm{deg} (f_j\circ\gamma),
$$
i.e. $\mathrm{d}(f)=\mathrm{deg} (f\circ\gamma)$ ($f\circ\gamma$ is a polynomial mapping). 
\end{rem}
Now we can give the Example announced in Remark \ref{zap}.
\begin{ex}\label{kontr}
Consider once again $A\colon y^2=x^3$ of projective degree 3. According to the last Theorem, for any non-constant $f\in{\oa}(A)$, we have $\mathcal{B}(f)=\mathrm{d}(f)/3$. Since $\mathrm{d}(f)$ coincides with the degree of the polynomial $f\circ \gamma$ where $\gamma(t)=(t^2,t^3)$, we see that we it is impossible to obtain e.g. $\mathcal{B}(f)=1/2$. Therefore, the second inclusion in Proposition \ref{AG2.3} is in this case strict.
\end{ex}
Using the methods from the proof of Theorem \ref{Bezout}, we obtain an upper bound for the \L ojasiewicz exponent at infinity:
\begin{thm}\label{AB}
Let $f\in{\oa}(A,{\C}^k)$ be proper with $A\subset{\C}^m$ pure $k$-dimensional, $k\geq 1$. Then 
$$
\mathcal{L}_\infty(f)\leq \sqrt[k]{\frac{\mathrm{d}(f)}{\deg A}}.
$$
\end{thm}
\begin{proof}
Take an exponent $\ell>0$ such that $|f(x)|\geq \mathrm{const.}|x|^\ell$ for $x\in A$, $|x|\gg 1$. Fix an integer $q>0$ such that $q\ell\in\mathbb{N}$ ad put $F_j:=f_j^q$. The resulting mapping $F=(F_1,\dots, F_k)$ is c-algebraic, proper and clearly $\mathrm{d}(F)=q^k\mathrm{d}(f)$. Moreover, taking e.g. the maximum norm we see that $|F(x)|=|f(x)|^q\geq \mathrm{const.}|x|^{q\ell}$ for $x\in A$ sufficiently large. Put $L:=q\ell$ and let
$$\Gamma=\{(x,y)\in A\times{\C}^k\mid y_j^L=F_j(x),\ j=1,\dots, k\}.$$
It is an algebraic set of pure dimension $k$ (cf. the proof of Theorem \ref{Bezout}). Choose coordinates in ${\C}^m$ in such a way that the projection $\pi$ onto the first $k$ coordinates realizes $\deg A$. Then for $p(x,y)=x$,the projection $(\pi\circ p)|_A$ has multiplicity $\deg A\cdot L^k$. Indeed, none of the functions $F_j$ can vanish identically on any irreducible component of $A$ due to the properness of $F$. Therefore, $F_j^{-1}(0)$ are pure $(k-1)$-dimensional (cf. \cite{D2}) and so we can find a point $z\in{\C}^k$ that is non-critical for $\pi|_A$ and does not lie in $\bigcup_j \pi(F_j^{-1}(0))$. Then the fibre $(\pi\circ p)^{-1}(z)\cap \Gamma$ has the maximal possible cardinality $\deg A\cdot L^k$. 

Take now $(x,y)\in\Gamma$ with $|x|\gg 1$, then 
$$
\mathrm{const.}|x|^L\leq |F(x)|=\max_j|F_j(x)|=|y|^L
$$
implies that for some $R>0$, 
$$
\Gamma\cap\{(x,y)\in{\C}^m\times{\C}^k\mid |x|\geq R\}\subset\{(x,y)\in{\C}^m\times{\C}^k\mid |x|\leq \mathrm{const.}|y|\}
$$
and so the projection $\varrho(x,y)=y$ realizes the degree of $\Gamma$. It is thus equal to $\mathrm{d}(F)$ and eventually,
$$
q^k\mathrm{d}(f)=\mathrm{d}(F)=\deg\Gamma\geq \deg A\cdot L^k=\deg A\cdot(q\ell)^k
$$
which gives the result sought for.
\end{proof}
\begin{rem}
Theorem \ref{krzywa} implies that the inequality in the last Theorem is strict.
\end{rem}

\section{Acknowledgements}

During the preparation of the first version of this paper, the second-named author was partially supported by Polish Ministry of Science and Higher Education grant  IP2011 009571. The present article has been much extended in comparison to the first version.

\end{document}